\newcommand{\iso}{\cong}
\numberwithin{equation}{section}
\newtheorem{thm}[equation]{Theorem}
\newtheorem{lem}[equation]{Lemma}
\newtheorem{cor}[equation]{Corollary}
\newtheorem{prop}[equation]{Proposition}
\theoremstyle{definition}
\newtheorem{defn}[equation]{Definition}
\newtheorem{ex}[equation]{Example}
\theoremstyle{remark}
\newtheorem{rem}[equation]{Remark}
\theoremstyle{remark}
\newtheorem{rems}[equation]{Remarks}
\newcommand{\GL}{{\rm GL}}
\renewcommand{\Im}{{\rm Im}}
\newcommand{\ovl}{\overline}
\subjclass[2010]{20G15 (14L24)}
\keywords{Semisimplification, $G$-complete reducibility, geometric invariant theory, rationality, cocharacter-closed orbits, degeneration of $G$-orbits}
\title[Semisimplification for subgroups of reductive algebraic groups]
{Semisimplification for subgroups of reductive algebraic groups}
\author[M.\  Bate]{Michael Bate}
\address
{Department of Mathematics,
University of York,
York YO10 5DD,
United Kingdom}
\email{michael.bate@york.ac.uk}
\author[B.\ Martin]{Benjamin Martin}
\address
{Department of Mathematics,
University of Aberdeen,
King's College,
Fraser Noble Building,
Aberdeen AB24 3UE,
United Kingdom}
\email{b.martin@abdn.ac.uk}
\author[G. R\"ohrle]{Gerhard R\"ohrle}
\address
{Fakult\"at f\"ur Mathematik,
Ruhr-Universit\"at Bochum,
D-44780 Bochum, Germany}
\email{gerhard.roehrle@rub.de}
\begin{document}

\begin{abstract}
 Let $G$ be a reductive algebraic group---possibly non-connected---over a field $k$ and let $H$ be a subgroup of $G$.  If $G= \GL_n$ then there is a degeneration process for obtaining from $H$ a completely reducible subgroup $H'$ of $G$; one takes a limit of $H$ along a cocharacter of $G$ in an appropriate sense.  We generalise this idea to arbitrary reductive $G$ using the notion of $G$-complete reducibility and results from geometric invariant theory over non-algebraically closed fields due to the authors and Herpel.  Our construction produces a $G$-completely reducible subgroup $H'$ of $G$, unique up to $G(k)$-conjugacy, which we call a {\em $k$-semisimplification} of $H$.  This gives a single unifying construction which extends various special cases in the literature (in particular, it agrees with the usual notion for $G= \GL_n$ and with Serre's ``$G$-analogue'' of semisimplification for subgroups of $G(k)$ from \cite{serre2}).  We also show that under some extra hypotheses, one can pick $H'$ in a more canonical way using the Tits Centre Conjecture for spherical buildings and/or the theory of optimal destabilising cocharacters introduced by Hesselink, Kempf and Rousseau.
\end{abstract}

\maketitle

\section{Introduction}
\label{sec:intro}

The aim of this paper is to present a construction of the \emph{semisimplification} of a subgroup $H$ of a (possibly non-connected) reductive linear algebraic group $G$ over an arbitrary field $k$.
This construction unifies and generalizes many concepts already in the literature within a single framework. 
For example, the semisimplification of a module for a group is a well-known construction in representation theory, 
corresponding in our case to the situation where $H\subseteq \GL_n(k)$. 
Building on this idea, for $G$ a connected reductive linear algebraic group over a field $k$ and $H$ a subgroup of $G(k)$, Serre introduced the concept of a ``$G$-analogue" of semisimplification from representation theory in \cite[\S 3.2.4]{serre2}.
This notion is also used for representations of various kinds of algebras, e.g., see \cite{kraft}, \cite{BHRZ}, \cite{riedtmann}, \cite{zwara1}, and \cite{zwara2}.
It is also an ingredient in work of Lawrence-Sawin on the Shafarevich Conjecture for abelian varieties
\cite{LawrenceSawin} and work of Lawrence-Venkatesh on Mordell's Conjecture \cite{LawrenceVenkatesh},
which involve Galois representations taking values in possibly non-connected reductive $p$-adic groups.

We begin by recalling how the most basic case works.
Let $n\in {\mathbb N}$ and let $H$ be a subgroup of $\GL_n(k)$.
There is an $H$-module filtration of $k^n$ such that the successive quotients are irreducible, by the Jordan-H\"older Theorem.
In terms of matrices, this implies that, by changing basis if necessary, we may assume that $H$ is in upper block-triangular form, with the action of $H$ on each quotient being represented by the corresponding block on the diagonal. Letting $H'$ be the subgroup of $\GL_n(k)$ consisting of the block diagonal matrices obtained by taking each element of $H$ and replacing the entries above the block diagonal with 0s, we obtain a subgroup which acts semisimply on $k^n$---that is, $H'$ is completely reducible.  
Since this construction is independent of the choice of basis up to $\GL_n(k)$-conjugacy, again by the Jordan-H\"older Theorem, it is therefore reasonable to call $H'$ the \emph{semisimplification} of $H$. 

We now explain some of the ingredients of our construction in the case that $k$ is algebraically closed, which removes some technicalities.  
Recall \cite{BMR}, \cite{serre2} that if $G$ is connected and $H$ is a subgroup of $G$ then $H$ is \emph{$G$-completely reducible} ($G$-cr for short) if for any parabolic subgroup $P$ of $G$ such that $P$ contains $H$, there is a Levi subgroup $L$ of $P$ such that $L$ contains $H$.  If $G= \GL_n$ then $H$ is $G$-cr if and only if $k^n$ is completely reducible as an $H$-module; this follows from the usual characterisation of parabolic subgroups of $\GL_n$ as stabilizers of flags of subspaces.  We make the same definition for arbitrary reductive $G$, replacing parabolic subgroups and Levi subgroups with R-parabolic subgroups and R-Levi subgroups instead (see Section~\ref{sec:cochar} for details).

To perform our construction, we apply a characterisation of $G$-complete reducibility in terms of geometric invariant theory (GIT).  We see this idea already in our original example: we can view $H'$ as a degeneration of $H$ in the following sense.  
Let the sizes of the blocks down the diagonal be $n_1,\ldots,n_r$, and 
define a cocharacter $\lambda\colon {\mathbb G}_m\to \GL_n$ by
$$
\lambda(a)= {\rm diag}(a^r,\ldots, a^r, \ldots, a^1,\ldots, a^1), \text{ with }  n_i \text{ occurrences of } a^{r-i+1}, 1\leq i \leq r.
$$
For each $a\in k^*$, define $H_a= \lambda(a)H\lambda(a)^{-1}$ for $a\in k^*$.  Then $H'=
\lim_{a\to 0} H_a$ in an appropriate sense.

Our definition of $k$-semisimplification (Definition~\ref{defn:ss}) for arbitrary $k$ is new, 
generalizes the one given by Serre in \cite[\S 3.2.4]{serre2}, and is closely related to the definition given in \cite{GIT} using optimal destabilising cocharacters; the two notions agree whenever the latter makes sense (cf.\ also \cite[Sec.\ 4]{genred} for the algebraically closed case).  We prove that the $k$-semisimplification of a subgroup $H$ of $G$ is unique up to conjugacy (Theorem~\ref{thm:main}), generalizing \cite[Prop.~3.3(b)]{serre2}.  
In Theorem~\ref{thm:normal} we show that a normal subgroup of a $G$-completely reducible subgroup $H$ is $G$-completely reducible and that the process of $k$-semisimplification behaves well under passing to normal subgroups of $H$, if $k$ is perfect or $G$ is connected.  The proof rests on deep results from the theory of spherical buildings and the Hesselink-Kempf-Rousseau theory of optimal destabilising cocharacters.  We give a short and self-contained exposition, bringing together some results (such as Corollary~\ref{cor:char0}) that follow from previous work but are not easily extracted from earlier papers.

\section{Cocharacter-closed orbits}
\label{sec:cochar}

Following \cite{borel} and our earlier work \cite{GIT}, \cite{cochar}, we regard an affine variety over a field $k$ as a variety $X$ over the algebraic closure $\ovl{k}$ together with a choice of $k$-structure.  We denote the separable closure of $k$ by $k_s$.  We write $X(k)$ for the set of $k$-points of $X$ and $X(\ovl{k})$ (or just $X$) for the set of $\ovl{k}$-points of $X$.  By a subvariety of $X$ we mean a closed $\ovl{k}$-subvariety of $X$; a $k$-subvariety is a subvariety that is defined over $k$.  We denote by $M_n$ the associative algebra of $n\times n$ matrices over $k$.  Below $G$ denotes a possibly non-connected reductive linear algebraic group over $k$.  
By a subgroup of $G$ we mean a closed $\ovl{k}$-subgroup and by a $k$-subgroup we mean a subgroup that is defined over $k$. 
(We note here that much of what follows works for non-closed subgroups---most of the important conditions hold for $H$ if and only if they hold for the Zariski closure $\overline{H}$; the details are left to the reader.) By $G^0$ we denote the identity component of $G$, and likewise for subgroups of $G$.
	
	We define $Y_k(G)$ to be the set of $k$-defined cocharacters of $G$
and $Y(G) := Y_{\ovl{k}}(G)$ to be the set of all  cocharacters of $G$.

 Let $H$ be a subgroup of $G$.  Even if $H$ is $k$-defined, the (set-theoretic) centralizer $C_G(H)$ need not be $k$-defined in general.  It is useful to have criteria to ensure that $C_G(H)$ is $k$-defined (see Proposition~\ref{prop:ratgeom} and Section~\ref{sec:opt}).  For instance, if $k$ is perfect and $H$ is $k$-defined then $C_G(H)$ is $k$-defined.  We say that $H$ is \emph{separable} if the scheme-theoretic centralizer ${\mathscr C}_G(H)$ is smooth \cite[Def.~3.27]{BMR}; for instance, any  subgroup of $\GL_n$ is separable \cite[Ex.~3.28]{BMR} (see \cite{BMRT} for more examples of separable subgroups).  If $H$ is $k$-defined and separable then $C_G(H)$ is $k$-defined (see \cite[Prop.\ 7.4]{cochar}).
 
Next we recall some basic notation and facts 
concerning 
parabolic subgroups in (non-connected) reductive groups $G$ from \cite[\S 6]{BMR} and \cite{GIT}.
Given $\lambda\in Y(G)$, we define 
$$
P_\lambda= \{g\in G\,|\,\lim_{a\to 0} \lambda(a)g\lambda(a)^{-1}\ \mbox{exists}\}
$$
and $L_\lambda= C_G({\rm Im}(\lambda))$ (for the definition of a limit, see \cite[Sec.~3.2.13]{springer}).  We call $P_\lambda$ an \emph{R-parabolic subgroup} of $G$ and $L_\lambda$ an \emph{R-Levi subgroup} of $P_\lambda$; they are subgroups of $G$.  We have $P_\lambda= L_\lambda= G$ if $\Im(\lambda)$ is contained in the centre of $G$. 
For ease of reference, we record without proof some basic facts about these subgroups.

\begin{lem} 
\begin{itemize}
\item[(i)] If $P$ is a $k$-defined R-parabolic subgroup then $R_u(P)$ is $k$-defined.
\item[(ii)] If $P$ is a parabolic subgroup of $G^0$ then the normalizer $N_G(P)$ is an R-parabolic subgroup of $G$, and $N_G(P)$ is $k$-defined if $P$ is.  
\end{itemize}
\end{lem}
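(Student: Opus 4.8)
We sketch how one would prove the two parts; both are of a standard structural nature, and the relevant calculus of the subgroups $P_\lambda$, $L_\lambda$ is developed in \cite[\S 6]{BMR}.

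\emph{Part (i).} The plan is to reduce immediately to the connected case. Writing $P = P_\lambda$ for some $\lambda \in Y(G)$, the subgroup $P^0 := P \cap G^0$ is a parabolic subgroup of $G^0$; it is $k$-defined, being the intersection of the $k$-subvarieties $P$ and $G^0$, and, being connected, it coincides with the identity component of $P$. As the unipotent radical depends only on the identity component, $R_u(P) = R_u(P^0)$, so it suffices to know that the unipotent radical of a $k$-defined parabolic subgroup of a connected reductive $k$-group is $k$-defined; this is part of the Borel--Tits theorem on $k$-rational Levi decompositions (see \cite[\S 20--21]{borel}). In the spirit of the present paper one can argue instead as follows: $P^0$ contains a maximal $k$-split torus $S$ of $G^0$, and relative to $S$ it equals $P_\mu \cap G^0$ for a suitable $\mu \in Y_k(S) \subseteq Y_k(G^0)$; the ``limit'' map $c_\mu \colon P_\mu \to L_\mu$, $g \mapsto \lim_{a\to 0}\mu(a)g\mu(a)^{-1}$, is the restriction to $\{0\}$ of the unique --- hence Galois-equivariant, hence $k$-defined --- extension of the $k$-morphism $(a,g) \mapsto \mu(a)g\mu(a)^{-1}$ over $\mathbb{A}^1 \times P_\mu$, so $c_\mu$ is a $k$-defined homomorphism; and $R_u(P^0) = R_u(P_\mu) = \ker c_\mu$ is $k$-defined.

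\emph{Part (ii), first assertion.} The crucial point is to pick a cocharacter defining $P$ that is adapted to $N_G(P)$, rather than an arbitrary one. First I would fix a Levi subgroup $L$ of $P$. Since every Levi subgroup of $P$ is conjugate to $L$ under $R_u(P) \subseteq N_G(P)$, we get $N_G(P) = R_u(P)\cdot N$, where $N := \{g \in N_G(P) \mid gLg^{-1} = L\}$. Now $N$ normalizes $L$, hence normalizes the torus $Z(L)^0$ and acts on it through a finite group by rigidity of tori, and it preserves the nonempty open convex cone in $Y(Z(L)^0)\otimes\mathbb{R}$ spanned by the cocharacters $\nu$ with $P_\nu \cap G^0 = P$, since $N$ normalizes $R_u(P)$ and so permutes its roots. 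Averaging a lattice point of this cone over the (finite) image of $N$ and clearing denominators produces $\lambda \in Y(Z(L)^0)$ with $P_\lambda \cap G^0 = P$ and $n\lambda n^{-1} = \lambda$ for all $n \in N$. Since $P_\lambda \cap G^0 = P$, the subgroup $P_\lambda$ normalizes $P$ --- it normalizes $G^0$ and it normalizes itself --- so $P_\lambda \subseteq N_G(P)$; conversely $R_u(P) \subseteq P \subseteq P_\lambda$ and $N \subseteq C_G(\Im(\lambda)) = L_\lambda \subseteq P_\lambda$, so $N_G(P) = R_u(P)\cdot N \subseteq P_\lambda$. Therefore $N_G(P) = P_\lambda$, an R-parabolic subgroup of $G$.

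\emph{Part (ii), rationality, and the main difficulty.} If $P$ is $k$-defined I would deduce that $N_G(P) = P_\lambda$ is $k$-defined either by running the construction above starting from a $k$-defined Levi subgroup $L$ of $P$ (which exists by Borel--Tits) and checking that the averaging and rescaling can be performed over $k$, so that $\lambda \in Y_k(G)$, or by a Galois descent argument from the $k$-defined identity component $P = N_{G^0}(P)$. The step I expect to need the most care is the identification $N_G(P) = P_\lambda$ itself: it is false with an arbitrary cocharacter $\mu$ satisfying $P_\mu \cap G^0 = P$, because $P_\mu$ may then be strictly smaller than $N_G(P)$. For instance, with $G = \mathrm{O}_4 \supseteq \mathrm{SO}_4 = G^0$ and $P$ a Borel subgroup, the element of $G$ interchanging the two $\mathrm{SL}_2$-factors of $G^0$ normalizes $P$ but lies in $P_\mu$ only when $\mu$ is chosen symmetrically in the two factors; so the averaging step, which produces a cocharacter centralized by a Levi-stabilizer in $N_G(P)$, is indispensable.
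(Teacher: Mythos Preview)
The paper explicitly records this lemma \emph{without proof}: the sentence immediately preceding it reads ``For ease of reference, we record without proof some basic facts about these subgroups.'' So there is no argument in the paper to compare yours against; the implicit sources are \cite[\S 6]{BMR} for the structure of R-parabolics and R-Levis in the non-connected case, together with the standard rationality results in \cite{borel} and \cite{springer}.

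Your argument is correct and is, as you yourself note, essentially the one found in \cite[\S 6]{BMR}. The averaging construction in part~(ii) --- producing $\lambda \in Y(Z(L)^0)$ fixed by the Levi-stabiliser $N \subseteq N_G(P)$ so that $P_\lambda = N_G(P)$ --- is precisely \cite[Lem.~6.2(ii)]{BMR}, and your $\mathrm{O}_4$ example correctly isolates why an arbitrary cocharacter with $P_\mu \cap G^0 = P$ does not suffice. For the rationality claim in~(ii), your first route goes through cleanly: starting from a $k$-defined Levi $L$ of $P$ and some $\mu \in Y_k(Z(L)^0)$ with $P_\mu \cap G^0 = P$ (such $\mu$ exists by \cite[Lem.~15.1.2(ii)]{springer}), the average $\sum_f f\cdot\mu$ over the finite, Galois-stable image of $N$ in $\mathrm{Aut}(Z(L)^0)$ is itself Galois-fixed (Galois permutes the summands) and still lies in the relevant open cone, so $\lambda \in Y_k(G)$ and $N_G(P) = P_\lambda$ is $k$-defined by \cite[Lem.~2.5]{GIT}.
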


If $G$ is connected then every pair $(P,L)$ consisting of a parabolic $k$-subgroup $P$ of $G$ and a Levi $k$-subgroup $L$ of $P$ is of the form $(P,L)= (P_\lambda, L_\lambda)$ for some $\lambda\in Y_k(G)$, and vice versa \cite[Lem.~15.1.2(ii)]{springer}.  
In general, if $\lambda\in Y_k(G)$ then $P_\lambda$ and $L_\lambda$ are $k$-defined \cite[Lem.~2.5]{GIT}, but the converse is not so straightforward. 
If $P$ is an R-parabolic $k$-subgroup and $L$ is an R-Levi $k$-subgroup of $P$ then for any maximal $k$-torus $T$ of $L$, there exists $\lambda\in Y_{k_s}(T)$ such that $P= P_\lambda$ and $L= L_\lambda$.  
However, it is possible that $P$ is a $k$-defined R-parabolic subgroup and yet there does not exist any $\mu\in Y_k(G)$ such that $P= P_\mu$, and similarly for R-Levi subgroups---see \cite[Rem.~2.4]{GIT}.  This complicates some of the arguments below. 

\begin{lem}
\label{lem:parprops}
Let $P$ be an $R$-parabolic subgroup of $G$ and $L$ an R-Levi subgroup of $P$.
\begin{itemize}
\item[(i)] We have $P\iso L\ltimes R_u(P)$, and this is a $k$-isomorphism if $P$ and $L$ are $k$-defined.  
\item[(ii)] Any two R-Levi $k$-subgroups of an R-parabolic $k$-subgroup $P$ are $R_u(P)(k)$-conjugate.  
\end{itemize}
\end{lem}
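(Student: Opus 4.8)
The plan is to establish both statements first over the algebraic closure $\ovl{k}$, where they reduce to well-known structural facts about R-parabolic subgroups, and then to descend to $k$ using that $R_u(P)$ is $k$-defined and $k$-split together with the hypothesis that $L$ is $k$-defined.

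For part (i), I would work first over $\ovl{k}$ and choose $\lambda\in Y(G)$ with $P= P_\lambda$ and $L= L_\lambda$. The limit map
$$
c_\lambda\colon P_\lambda\to L_\lambda,\qquad g\mapsto \lim_{a\to 0}\lambda(a)g\lambda(a)^{-1},
$$
is a homomorphism of algebraic groups which restricts to the identity on $L_\lambda$ and has kernel $R_u(P_\lambda)$ \cite[\S 6]{BMR}, so the multiplication map $L\ltimes R_u(P)\to P$ is an isomorphism of algebraic groups and $P\iso L\ltimes R_u(P)$. Now suppose $P$ and $L$ are $k$-defined. Then $R_u(P)$ is $k$-defined by the first part of the preceding lemma, and the conjugation action of $L$ on $R_u(P)$ is $k$-defined, so $L\ltimes R_u(P)$ is a $k$-group and the multiplication map $\mu\colon L\ltimes R_u(P)\to P$ is a homomorphism of $k$-groups; since $\mu$ becomes an isomorphism after base change to $\ovl{k}$, faithfully flat descent shows that $\mu$ is an isomorphism of $k$-groups, giving (i).

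For part (ii), I would again work first over $\ovl{k}$, where it is standard that $R_u(P)$ acts transitively on the set of R-Levi subgroups of $P$ \cite[\S 6]{BMR}. I would also record that this action is simple: if $u\in R_u(P_\lambda)$ normalises $L_\lambda$, then since $\lambda(a)\in L_\lambda$ and $u$ normalises both $L_\lambda$ and $R_u(P_\lambda)$, the element $u\lambda(a)u^{-1}\lambda(a)^{-1}$ lies in $L_\lambda\cap R_u(P_\lambda)=\{1\}$ for every $a$; hence $u$ centralises $\Im(\lambda)$, so $u\in L_\lambda\cap R_u(P_\lambda)=\{1\}$. Thus, given R-Levi $k$-subgroups $L$ and $L'$ of $P$, there is a unique $u\in R_u(P)$ with $uLu^{-1}=L'$, and because $L$, $L'$ and $R_u(P)$ are all $k$-defined, this $u$ is fixed by $\mathrm{Gal}(k_s/k)$; it then remains to deduce that $u\in R_u(P)(k)$.

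The hard part will be exactly this last rationality step, since over an imperfect field a $\mathrm{Gal}(k_s/k)$-fixed $\ovl{k}$-point of a $k$-variety need not be $k$-rational, so one cannot simply invoke Galois descent. I expect to resolve it using the finer structure of $R_u(P)$: it admits a $P$-stable, $k$-defined filtration with vector-group successive quotients, and one can push the conjugating element down this filtration one step at a time, the vanishing of $H^1$ for $k$-split unipotent groups (together with the triviality and smoothness of the transporter scheme implicit in the simplicity computation above) keeping everything $k$-rational. By contrast, the reductions to $\ovl{k}$ and the descent argument in part (i) should be routine.
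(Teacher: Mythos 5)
The paper does not give a proof of this lemma at all: it is recalled, alongside the preceding unlabelled lemma, as a standard fact drawn from \cite[\S 6]{BMR}, \cite{GIT} and \cite{borel}, so there is no argument of the paper's to compare against.

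Your reconstruction follows the standard route and is essentially correct. Part (i) is complete and fine. For part (ii), the transitivity-plus-freeness computation over $\ovl{k}$ is clean, and you are right to flag that Galois-fixedness of the unique conjugating element $u$ does not yield $k$-rationality over imperfect $k$, so more is needed. Your proposed fix---descend along a $k$-defined, $P$-stable filtration of $R_u(P)$ with $k$-split vector-group quotients, using $H^1(k,{\mathbb G}_a^m)=0$ to push $u$ down one step at a time---is the right idea and is essentially the proof in the connected case (\cite[Prop.~20.5]{borel}); the R-parabolic case reduces to it because $R_u(P)=R_u(P^0)$ and $P^0$ is a parabolic $k$-subgroup of $G^0$. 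One genuine misstep to remove is the parenthetical invocation of ``the triviality and smoothness of the transporter scheme implicit in the simplicity computation above.'' That computation shows only that the \emph{reduced} transporter consists of a single $\ovl{k}$-point; it gives no control on the scheme-theoretic transporter, which could a priori be a non-reduced thickening with no $k$-point despite having a unique Galois-fixed $\ovl{k}$-point. If one leaned on smoothness there, the argument would have a real hole. Your filtration-and-cohomology route needs no such input, so simply delete that clause. Finally, when writing out the filtration step, be explicit that the graded pieces really are $k$-split vector groups and not merely commutative unipotent $k$-groups (over imperfect $k$ the latter need not be split); this is where the root-group description of $R_u(P)$ enters.
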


We denote the canonical projection from $P$ to $L$ by $c_L$; this is $k$-defined if $P$ and $L$ are.  
If we are given $\lambda\in Y(G)$ such that $P= P_\lambda$ and $L= L_\lambda$ then we often write $c_\lambda$ instead of $c_L$.  We have $c_\lambda(g)= \lim_{a\to 0} \lambda(a)g\lambda(a)^{-1}$ for $g\in P_\lambda$; the kernel of $c_\lambda$ is the unipotent radical $R_u(P_\lambda)$ and the set of fixed points of $c_\lambda$ is $L_\lambda$. 

Let $m\in {\mathbb N}$.  Below we consider the action of $G$ on $G^m$ by simultaneous conjugation: $g\cdot (g_1,\ldots, g_m)= (gg_1g^{-1},\ldots, gg_mg^{-1})$.  Given $\lambda\in Y(G)$, we have a map $P_\lambda^m\to L_\lambda^m$ given by ${\mathbf g}\mapsto \lim_{a\to 0} \lambda(a)\cdot {\mathbf g}$; we abuse notation slightly and also call this map $c_\lambda$.  For any ${\mathbf g}\in P_\lambda^m$, there exists an R-Levi $k$-subgroup $L$ of $P_\lambda$ with ${\mathbf g}\in L^n$ if and only if $c_\lambda({\mathbf g})= u\cdot {\mathbf g}$ for some $u\in R_u(P_\lambda)(k)$.

Our main tool from GIT is the notion of cocharacter-closure, introduced in \cite{GIT} and \cite{cochar}.

\begin{defn}
 Let $X$ be an affine $G$-variety and let $x\in X$ (we do not require $x$ to be a $k$-point).  We say that the orbit $G(k)\cdot x$ is \emph{cocharacter-closed over $k$} if for all $\lambda\in Y_k(G)$ such that $x':= \lim_{a\to 0} \lambda(a)\cdot x$ exists, $x'$ belongs to $G(k)\cdot x$.  If $k= \ovl{k}$ then it follows from the Hilbert-Mumford Theorem that $G(k)\cdot x$ is cocharacter-closed over $k$ if and only if $G(k)\cdot x$ is closed \cite[Thm.~1.4]{kempf}.  If ${\mathcal O}$ is a $G(k)$-orbit in $X$ then we say that ${\mathcal O}$ is \emph{accessible from $x$ over $k$} if there exists $\lambda\in Y_k(G)$ such that $x':= \lim_{a\to 0} \lambda(a)\cdot x$ belongs to ${\mathcal O}$.
\end{defn}

\begin{ex}
\label{ex:accessible_tuple}
 If $X= G^m$, $\lambda\in Y_k(G)$ and ${\mathbf g}\in P_\lambda^m$ then $G(k)\cdot c_\lambda({\mathbf g})$ is accessible from ${\mathbf g}$ over $k$.
\end{ex}

The following result is \cite[Thm.\ 1.3]{cochar}.

\begin{thm}[Rational Hilbert-Mumford Theorem]
\label{thm:RHMT}
 Let $G$, $X$, $x$ be as above.  Then there is a unique $G(k)$-orbit ${\mathcal O}$ such that ${\mathcal O}$ is cocharacter-closed over $k$ and accessible from $x$ over~$k$.
\end{thm}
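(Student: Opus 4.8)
\emph{Proof proposal.} Write $d(x) := \dim \overline{G\cdot x}$ for the dimension of the geometric orbit closure; this is the quantity on which all the inductions will run. The plan is to prove existence by repeatedly replacing $x$ by a suitable $k$-rational limit, lowering $d$ until one reaches a $G(k)$-orbit that cannot be destabilised over $k$, and to prove uniqueness by showing that any two orbits which are cocharacter-closed over $k$ and accessible from $x$ over $k$ must intersect.

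\emph{Existence.} I would argue by induction on $d(x)$. If $G(k)\cdot x$ is already cocharacter-closed over $k$, take $\mathcal O = G(k)\cdot x$; it is accessible from $x$ via the trivial cocharacter. Otherwise there is $\lambda \in Y_k(G)$ with $x' := \lim_{a\to 0}\lambda(a)\cdot x$ existing and $x' \notin G(k)\cdot x$. Since $x' \in \overline{G\cdot x}$ we have $\overline{G\cdot x'}\subseteq \overline{G\cdot x}$, so either $d(x') < d(x)$, or $d(x') = d(x)$ and the two irreducible varieties coincide, in which case the orbits $G\cdot x$ and $G\cdot x'$ are both open in $\overline{G\cdot x}$, hence meet, hence coincide, i.e.\ $x' \in G(\ovl k)\cdot x$. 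The first thing I would prove is a rationality lemma ruling this out: \emph{if $\lambda \in Y_k(G)$ and $\lim_{a\to 0}\lambda(a)\cdot x$ lies in the geometric orbit of $x$, then it already lies in $R_u(P_\lambda)(k)\cdot x$.} Geometrically, such a limit must lie in the orbit $R_u(P_\lambda)\cdot x$, which is closed because $R_u(P_\lambda)$ is unipotent; the rational refinement comes from solving for the conjugating element along the $\Im(\lambda)$-weight filtration of $R_u(P_\lambda)$, whose graded pieces are $k$-split vector groups since $\lambda\in Y_k(G)$ (Lemma~\ref{lem:parprops} and the surrounding discussion), so each step reduces to vanishing of $H^1$ with vector-group coefficients. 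Granting this, $x' \in R_u(P_\lambda)(k)\cdot x \subseteq G(k)\cdot x$, a contradiction; hence $d(x') < d(x)$. By the inductive hypothesis there is an orbit $\mathcal O$, cocharacter-closed over $k$ and accessible from $x'$ over $k$, via some $\mu\in Y_k(G)$. Finally I would use that accessibility over $k$ is transitive (a combination-of-cocharacters lemma): from $x' = \lim_{a\to 0}\lambda(a)\cdot x$ and $\lim_{a\to 0}\mu(a)\cdot x' \in\mathcal O$ one manufactures a single $\nu\in Y_k(G)$ with $\lim_{a\to 0}\nu(a)\cdot x\in\mathcal O$, so $\mathcal O$ is accessible from $x$ over $k$, completing the induction.

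\emph{Uniqueness.} Suppose $\mathcal O_1$ and $\mathcal O_2$ are each cocharacter-closed over $k$ and accessible from $x$ over $k$, via $\lambda_1$ and $\lambda_2$, with $y_i := \lim_{a\to 0}\lambda_i(a)\cdot x\in\mathcal O_i$. It suffices to show that $\mathcal O_2$ is accessible over $k$ from some point of $\mathcal O_1$: since $\mathcal O_1$ is cocharacter-closed over $k$, the corresponding limit then lies in $\mathcal O_1$ as well as in $\mathcal O_2$, so $\mathcal O_1\cap\mathcal O_2\ne\emptyset$ and hence $\mathcal O_1=\mathcal O_2$. To produce such a degeneration I would compare the two cocharacters: choose a maximal torus of $G$ lying in $P_{\lambda_1}\cap P_{\lambda_2}$ and defined over $k_s$, conjugate over $k_s$ to bring the images of $\lambda_1$ and $\lambda_2$ into it so that they commute, form iterated limits along them---which agree in either order, by the description of the projections $c_{\lambda_i}$ and Lemma~\ref{lem:parprops}---and then descend from $k_s$ back to $k$, using the cocharacter-closedness hypotheses to control the conjugating elements.

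\emph{The main obstacle} is uniqueness. Over $\ovl k$ it is the classical fact that an affine orbit closure contains a unique closed orbit, proved via the categorical quotient $X/\!\!/G$; but over a general field that quotient does not separate the $G(k)$-orbits in play, so the coincidence $\mathcal O_1=\mathcal O_2$ must be forced directly at the level of cocharacters, and in particular one must rule out the a priori possibility that there are several cocharacter-closed-over-$k$ orbits accessible from $x$ that are not even geometrically conjugate. The two places where I expect real work are (i) manufacturing and composing $k$- and $k_s$-defined cocharacters while keeping control of where the limits land, and (ii) the descent steps---both in the rationality lemma above and in the passage from $k_s$ down to $k$ in the uniqueness argument---which rest on the good behaviour over $k$ of the R-parabolic and R-Levi subgroups attached to cocharacters in $Y_k(G)$, in particular the splitness over $k$ of their unipotent radicals, recorded in the lemmas preceding this statement.
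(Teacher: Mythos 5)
First, a point of comparison: the paper does not prove Theorem~\ref{thm:RHMT} at all --- it is quoted as \cite[Thm.~1.3]{cochar} --- so there is no internal proof to measure your sketch against; what follows assesses your outline against the argument actually carried out in that reference.

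Your architecture is plausible in outline, but each load-bearing step conceals a genuine gap. (1) The ``rationality lemma'' you isolate (a limit along $\lambda\in Y_k(G)$ that lands in the geometric orbit of $x$ already lies in $R_u(P_\lambda)(k)\cdot x$) is exactly \cite[Thm.~7.1]{cochar}, and it is the technical heart of that paper rather than a preliminary. Your proposed proof via vanishing of $H^1$ with vector-group coefficients does not go through: the set of elements of $R_u(P_\lambda)$ carrying $x$ to $x'$ is a coset of the stabilizer $G_x\cap R_u(P_\lambda)$, which need not be smooth or $k$-defined (this is precisely the separability issue flagged in Section~\ref{sec:cochar}), and indeed $x$ and $x'$ themselves need not be $k$-points, so the coset is not even a $k$-variety in general. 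The weight filtration of $R_u(P_\lambda)$ is the right starting point, but the descent at each graded piece is not a torsor argument. (2) Transitivity of accessibility over $k$ is asserted, not proved: to combine $\lambda$ and $\mu$ into a single $\nu$ (say $n\lambda+\mu$ for $n\gg 0$) one must first arrange $\mu\in Y_k(L_\lambda)$, and the conjugation needed for that itself requires rational control of the conjugating element. (3) Uniqueness is the weakest point. After conjugating $\lambda_2$ so that it commutes with $\lambda_1$, there is no reason that the limit of $y_1=\lim_{a\to 0}\lambda_1(a)\cdot x$ along the new cocharacter exists, so the ``iterated limits agree in either order'' step has nothing to apply to; and the closing ``descend from $k_s$ back to $k$'' is Galois descent of cocharacter-closedness, itself a major theorem of \cite{cochar} and not a routine manoeuvre. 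The proof in \cite{cochar} is organised differently: it runs through Levi ascent/descent (their Theorem~5.4, exploiting that $\Im(\lambda)$ fixes the limit point, so one can induct via the $k$-defined R-Levi subgroups $L_\lambda$) with Theorem~7.1 supplying the rationality at each stage, rather than through an induction on the dimension of the geometric orbit.
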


\section{$G$-complete reducibility}
\label{sec:Gcr}

\begin{defn}
 Let $H$ be a subgroup of $G$.  We say that $H$ is \emph{$G$-completely reducible over $k$} ($G$-cr over $k$) if for any R-parabolic $k$-subgroup $P$ of $G$ such that $P$ contains $H$, there is an R-Levi $k$-subgroup $L$ of $P$ such that $L$ contains $H$.  We say that $H$ is \emph{$G$-irreducible over $k$} ($G$-ir over $k$) if $H$ is not contained in any proper R-parabolic $k$-subgroup of $G$ at all.
\end{defn}

\begin{rem}
\label{rem:absolute}
 We say that $H$ is $G$-cr if $H$ is $G$-cr over $\ovl{k}$---cf.\ Section~\ref{sec:intro}.  More generally, if $k'/k$ is an algebraic field extension then we may regard $G$ as a $k'$-group and it makes sense to ask whether $H$ is $G$-cr over $k'$.
\end{rem}

\noindent For more on $G$-complete reducibility, see \cite{serre1.5}, \cite{serre2}, \cite{BMR}.

Note that the definitions make sense even if $H$ is not $k$-defined.  It is immediate that $G$-irreducibility over $k$ implies $G$-complete reducibility over $k$.  We have $P_{g\cdot \lambda}= gP_\lambda g^{-1}$ and $L_{g\cdot \lambda}= gL_\lambda g^{-1}$ for any $\lambda\in Y(G)$ and any $g\in G$ (see, e.g.,  \cite[\S 6]{BMR}).  It follows that if $H$ is $G$-cr over $k$ (resp., $G$-ir over $k$) then so is any $G(k)$-conjugate of $H$.  More generally, one can show that if $H$ is $G$-cr over $k$ (resp., $G$-ir over $k$) then so is $\phi(H)$, for any $k$-defined automorphism $\phi$ of $G$.  If $k= \ovl{k}$ and $H$ is $G$-cr then $H$ is reductive \cite[Prop.\ 4.1]{serre2}, \cite[\S 2.4, \S 6.2]{BMR}.  It follows from Proposition~\ref{prop:ratgeom} below that if $H$ is $k$-defined, $k$ is perfect and $H$ is $G$-cr over $k$ then $H$ is reductive.  We see below (Corollary~\ref{cor:char0}) that the converse holds in characteristic 0.  On the other hand, the converse is false in general, as is shown by the example in  \cite[Proof of Prop.~1.10]{uch1}.

 We now explain the link between $G$-complete reducibility and GIT.  Fix a $k$-embedding $\iota\colon G\to \GL_n$ for some $n\in {\mathbb N}$.  Let $H$ be a subgroup of $G$.  Let $m\in {\mathbb N}$ and let ${\mathbf h}= (h_1,\ldots, h_m)\in H^m$.  We call ${\mathbf h}$ a \emph{generic tuple for $H$ with respect to $\iota$} if $h_1,\ldots, h_m$ generate the subalgebra of $M_n$ generated by $H$ \cite[Def.~5.4]{GIT}.  Note that we don't insist that ${\mathbf h}$ is a $k$-point.  Our constructions below do not depend on the choice of $\iota$, so we suppress the words ``with respect to $\iota$''.  It is immediate that if ${\mathbf h}\in H^m$ is a generic tuple for $H$ and $g\in G$ then $g\cdot {\mathbf h}$ is a generic tuple for $gHg^{-1}$.

\begin{thm}[{\cite[Thm.\ 9.3]{cochar}}]
\label{thm:GIT_crit}
 Let $H$ be a subgroup of $G$ and let ${\mathbf h}\in H^m$ be a generic tuple for $H$.  Then $H$ is $G$-completely reducible over $k$ if and only if $G(k)\cdot {\mathbf h}$ is cocharacter-closed over $k$.
\end{thm}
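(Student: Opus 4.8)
The plan is to convert both conditions into statements about the generic tuple $\mathbf{h}$ and the projections $c_\lambda$, and then to match them using the fact recorded just before Theorem~\ref{thm:RHMT}: for $\mathbf{g}\in P_\lambda^m$ there is an R-Levi $k$-subgroup $L$ of $P_\lambda$ with $\mathbf{g}\in L^m$ if and only if $c_\lambda(\mathbf{g})= u\cdot\mathbf{g}$ for some $u\in R_u(P_\lambda)(k)$. The bridge between the two languages is a \emph{dictionary lemma}: for $\lambda\in Y_k(G)$ one has $\mathbf{h}\in P_\lambda^m$ if and only if $H\subseteq P_\lambda$, and in that case $c_\lambda(\mathbf{h})$ is a generic tuple for $c_\lambda(H)$. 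To prove this I would fix the embedding $\iota\colon G\ra\GL_n$; then $P_\lambda= G\cap P_{\iota\circ\lambda}$, and $P_{\iota\circ\lambda}$ is the unit group of the $k$-subalgebra $\mathfrak{p}\subseteq M_n$ of endomorphisms preserving the filtration of $k^n$ by sums of top weight spaces of $\iota\circ\lambda$. Since $\mathbf{h}$ generates the subalgebra of $M_n$ generated by $H$, we have $H\subseteq P_\lambda$ if and only if every $h_i$ lies in $\mathfrak{p}$, i.e.\ if and only if $\mathbf{h}\in P_\lambda^m$; the statement about $c_\lambda$ follows because passing to the associated graded of this filtration is a $k$-algebra homomorphism out of $\mathfrak{p}$ whose restriction to $P_{\iota\circ\lambda}$ is the canonical projection.

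\emph{Forward implication.} Suppose $H$ is $G$-cr over $k$, and let $\lambda\in Y_k(G)$ be such that $\mathbf{h}':=\lim_{a\to0}\lambda(a)\cdot\mathbf{h}= c_\lambda(\mathbf{h})$ exists; I must show $\mathbf{h}'\in G(k)\cdot\mathbf{h}$. Existence of the limit means $\mathbf{h}\in P_\lambda^m$, so $H\subseteq P_\lambda$ by the dictionary lemma, and $P_\lambda$ is $k$-defined. By $G$-complete reducibility over $k$ there is an R-Levi $k$-subgroup $L'$ of $P_\lambda$ with $H\subseteq L'$; by Lemma~\ref{lem:parprops}(ii), $L'= uL_\lambda u^{-1}$ for some $u\in R_u(P_\lambda)(k)$. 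Then $u^{-1}\cdot\mathbf{h}\in L_\lambda^m$ is fixed by $c_\lambda$, while $u^{-1}\in R_u(P_\lambda)=\ker c_\lambda$ forces $\lambda(a)u^{-1}\lambda(a)^{-1}\to1$, whence $c_\lambda(u^{-1}\cdot\mathbf{h})=c_\lambda(\mathbf{h})=\mathbf{h}'$. Thus $\mathbf{h}'=u^{-1}\cdot\mathbf{h}\in R_u(P_\lambda)(k)\cdot\mathbf{h}\subseteq G(k)\cdot\mathbf{h}$, so $G(k)\cdot\mathbf{h}$ is cocharacter-closed over $k$.

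\emph{Converse.} Suppose $G(k)\cdot\mathbf{h}$ is cocharacter-closed over $k$. I would first reduce to the case $k=k_s$: both $G$-complete reducibility over $k$ and cocharacter-closedness over $k$ are insensitive to separable extension (see \cite{cochar}, \cite{BMRT}), so it suffices to treat $k=k_s$, and then every R-parabolic $k$-subgroup $P$ has the form $P_\lambda$ with $\lambda\in Y_k(G)$. Given such a $P=P_\lambda$ containing $H$, the dictionary lemma gives $\mathbf{h}\in P_\lambda^m$, so $c_\lambda(\mathbf{h})=\lim_{a\to0}\lambda(a)\cdot\mathbf{h}$ exists, and cocharacter-closedness over $k$ gives $c_\lambda(\mathbf{h})\in G(k)\cdot\mathbf{h}$. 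One then invokes the rational form of the key lemma of \cite{BMR}: if $\mathbf{h}\in P_\lambda^m$ and $c_\lambda(\mathbf{h})\in G(k)\cdot\mathbf{h}$ then already $c_\lambda(\mathbf{h})\in R_u(P_\lambda)(k)\cdot\mathbf{h}$. Combined with the fact recalled at the start, this says precisely that $H$ lies in some R-Levi $k$-subgroup of $P$; as $P$ was an arbitrary R-parabolic $k$-subgroup containing $H$, this shows $H$ is $G$-cr over $k$.

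\emph{Main obstacle.} The crux is the rational key lemma used in the last step, namely the upgrade from $G(k)$-conjugacy of $\mathbf{h}$ and its limit $c_\lambda(\mathbf{h})$ to $R_u(P_\lambda)(k)$-conjugacy. Over an algebraically closed field this is a transversality/dimension argument --- the $\mathbb{G}_m$-action through $\lambda$ contracts $\mathbf{h}$ to $c_\lambda(\mathbf{h})$ inside the image of the orbit map restricted to $R_u(P_\lambda)$, so if the two points lie in a common $G$-orbit they must already be $R_u(P_\lambda)$-conjugate. The rational version needs a $k$-point of the relevant fibre inside $R_u(P_\lambda)$, a $k$-split unipotent group, and it is here that hypotheses such as separability of $H$ or perfectness of $k$ would naively enter; checking that the conclusion survives without them, together with the two insensitivity reductions above, is where the real work lies. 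The remaining ingredients --- the dictionary lemma, the behaviour of $c_\lambda$ on $R_u(P_\lambda)$, and the bookkeeping via Lemma~\ref{lem:parprops} --- are routine.
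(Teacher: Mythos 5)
The statement is quoted from \cite[Thm.~9.3]{cochar}; the present paper gives no proof of it, so I am assessing your proposal on its own terms against the argument it would need to supply.

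Your dictionary lemma and the forward implication are correct. Given $\lambda\in Y_k(G)$ with $\mathbf{h}\in P_\lambda^m$, the lemma gives $H\subseteq P_\lambda$; $G$-cr over $k$ gives a $k$-defined R-Levi $L'\supseteq H$; and since $P_\lambda$, $L_\lambda$ are $k$-defined (as $\lambda\in Y_k(G)$), Lemma~\ref{lem:parprops}(ii) produces $u\in R_u(P_\lambda)(k)$ with $c_\lambda(\mathbf{h})=u^{-1}\cdot\mathbf{h}$. That part is fine.

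The converse has two gaps. First, you correctly identify the ``rational key lemma'' --- the upgrade from $c_\lambda(\mathbf{h})\in G(k)\cdot\mathbf{h}$ to $c_\lambda(\mathbf{h})\in R_u(P_\lambda)(k)\cdot\mathbf{h}$ --- as the crux; this is \cite[Thm.~7.1]{cochar}. But you do not prove it, and your remarks about a ``transversality/dimension argument'' and ``a $k$-point of the relevant fibre'' do not amount to a proof. In particular, the result does hold without separability of $H$ or perfectness of $k$, and establishing precisely that is the substance of the source reference; so the converse as you have written it simply defers the real work.

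Second, the reduction to $k=k_s$ is not justified and is likely circular. Cocharacter-closedness does pass cleanly between $k$ and $k_s$ by an independent result of \cite{cochar}. But the corresponding statement for $G$-complete reducibility over $k$ versus over $k_s$ (as in \cite[Cor.~9.7]{cochar}) is \emph{deduced from} Theorem~9.3 together with the cocharacter-closedness comparison --- exactly the logic of Proposition~\ref{prop:ratgeom} in the present paper --- so invoking it inside a proof of Theorem~9.3 begs the question. An independent proof would have to descend an R-Levi $k_s$-subgroup containing $H$ to a $k$-defined one, and the natural $H^1$-vanishing argument runs into the problem that $C_G(H)\cap R_u(P)$ need not be $k$-defined or smooth; this is exactly the obstruction the theorem is designed to circumvent. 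The source reference instead treats an arbitrary R-parabolic $k$-subgroup $P\supseteq H$ directly, replacing a $k_s$-defined cocharacter $\mu$ with the Galois average $\lambda=\sum_\gamma\gamma\cdot\mu\in Y_k(T)$ (the same device as in Lemma~\ref{lem:k-cochar}), to which \cite[Thm.~7.1]{cochar} can then be applied without ever changing the ground field. That is where you should look to repair the converse.
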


Using this result one can derive many results on $G$-complete reducibility: for instance, see \cite{BMR} for the algebraically closed case and \cite{GIT}, \cite{cochar} for arbitrary $k$.  Note that if ${\mathbf h}\in H^m$ is a generic tuple for $H$ then the centralizer $C_G(H)$ coincides with the stabilizer $G_{\mathbf h}$.

\begin{prop}
\label{prop:ratgeom}
 Let $H$ be a $k$-subgroup of $G$.  Suppose $k$ is perfect.  Then $H$ is $G$-completely reducible over $k$ if and only if $H$ is $G$-completely reducible.
\end{prop}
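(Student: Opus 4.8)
The plan is to translate the statement into geometric invariant theory via Theorem~\ref{thm:GIT_crit} and then quote the rationality results for cocharacter-closed orbits over perfect fields from \cite{GIT} and \cite{cochar}. Fix a $k$-embedding $\iota\colon G\to\GL_n$ and choose a generic tuple $\mathbf{h}\in H^m$ for $H$; such a tuple exists since the subalgebra of $M_n$ generated by $H$ is finite-dimensional, although $\mathbf{h}$ need not be a $k$-point. By Theorem~\ref{thm:GIT_crit}, $H$ is $G$-completely reducible over $k$ if and only if $G(k)\cdot\mathbf{h}$ is cocharacter-closed over $k$. Applying the same criterion with $k$ replaced by $\ovl{k}$, and using that over an algebraically closed field a $G$-orbit is cocharacter-closed if and only if it is Zariski closed (Hilbert--Mumford; cf.\ the discussion after the definition of cocharacter-closure, and \cite[Thm.~1.4]{kempf}), $H$ is $G$-completely reducible if and only if $G\cdot\mathbf{h}$ is closed in $G^m$. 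So the Proposition reduces to the assertion that, for $k$ perfect and any point $x$ of an affine $G$-variety, $G(k)\cdot x$ is cocharacter-closed over $k$ if and only if $G\cdot x$ is closed, applied with $x=\mathbf{h}$.

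It remains to establish that GIT assertion for perfect $k$. One direction is almost formal: any limit $\lim_{a\to 0}\lambda(a)\cdot x$ with $\lambda\in Y_k(G)$ lies in $\ovl{G\cdot x}$, so if $G\cdot x$ is closed then such limits lie in $G\cdot x$; the real content is that they then lie in the \emph{smaller} orbit $G(k)\cdot x$, and, for the reverse direction, that if $G\cdot x$ fails to be closed then this failure is already detected by some $\lambda\in Y_k(G)$. Both are descent statements, and this is exactly where perfectness is used---the assertion fails over imperfect fields. The mechanism is the Hesselink--Kempf--Rousseau theory of optimal destabilising cocharacters: if $G\cdot x$ is not closed, an optimal destabilising cocharacter may be taken in $Y_k(G)$ (Kempf's rationality theorem, in the form valid for non-algebraically-closed ground fields and points that need not be $k$-rational, developed in \cite{GIT} and \cite{cochar}); it carries $x$ to a point of the unique closed orbit in $\ovl{G\cdot x}$, hence out of $G(k)\cdot x$, contradicting cocharacter-closedness over $k$; the reverse implication then follows, for instance from the uniqueness part of the Rational Hilbert--Mumford Theorem (Theorem~\ref{thm:RHMT}). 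In the write-up I would simply cite the relevant theorem from \cite{cochar} (respectively \cite{GIT}) and specialise it to $x=\mathbf{h}$; note that, $H$ being $k$-defined and $k$ perfect, $C_G(H)=G_{\mathbf{h}}$ is $k$-defined, which is the feature that keeps the orbit data under Galois control even though $\mathbf{h}$ itself is only an $\ovl{k}$-point.

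The single genuine obstacle, shared by every route, is this descent step: upgrading $\ovl{k}$-data---a destabilising cocharacter in $Y_{\ovl{k}}(G)$, or, if one argues directly from the definition, an element of $R_u(P)(\ovl{k})$ conjugating a given R-Levi $\ovl{k}$-subgroup of an R-parabolic $k$-subgroup $P$ into a fixed R-Levi $k$-subgroup---to the corresponding $k$-data. This is precisely the point at which perfectness cannot be dropped, and it is underwritten by the rationality theory of optimal cocharacters together with the $k$-definability of $C_G(H)$; the rest of the argument is bookkeeping with generic tuples and the dictionary furnished by Theorem~\ref{thm:GIT_crit}. One could equivalently package the whole argument as an instance of the stability of $G$-complete reducibility under separable algebraic field extensions, applied to the extension $\ovl{k}/k$, which is separable precisely because $k$ is perfect.
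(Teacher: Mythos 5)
Your proposal is correct and takes essentially the same approach as the paper: the paper's proof simply notes that $k$ perfect makes $\ovl{k}/k$ separable and makes $C_G(H)$ $k$-defined, and then cites \cite[Cor.~9.7(i)]{cochar}, which is exactly the ``stability of $G$-complete reducibility under separable extensions'' statement you identify in your final sentence. Your longer unpacking via generic tuples, Theorem~\ref{thm:GIT_crit}, and the rationality theory of cocharacter-closed orbits is precisely what that citation encapsulates.
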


\begin{proof}
 If $k$ is perfect then $\ovl{k}/k$ is separable and $C_G(H)$ is $k$-defined.  The result now follows from
 \cite[Cor.\ 9.7(i)]{cochar}.
\end{proof}

\begin{cor}
\label{cor:char0}
Suppose ${\rm char}(k)= 0$.   Let $H$ be a $k$-subgroup of $G$.  Then $H$ is $G$-completely reducible over $k$ if and only if $H$ is reductive.
\end{cor}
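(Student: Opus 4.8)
The plan is to reduce to the algebraically closed case via Proposition~\ref{prop:ratgeom} and then invoke the classical Levi decomposition, which is available in characteristic~$0$. Since ${\rm char}(k)=0$ the field $k$ is perfect, so by Proposition~\ref{prop:ratgeom} the subgroup $H$ is $G$-completely reducible over $k$ if and only if it is $G$-completely reducible; moreover reductivity of the $k$-subgroup $H$ is unaffected by passing to $\ovl{k}$. Hence it is enough to prove the statement when $k=\ovl{k}$. One implication is already recorded above: if $H$ is $G$-cr over $\ovl{k}$ then $H$ is reductive (\cite[Prop.~4.1]{serre2}, \cite[\S 2.4, \S 6.2]{BMR}). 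So the real content is to show: if $k=\ovl{k}$, ${\rm char}(k)=0$ and $H$ is a reductive subgroup of $G$, then $H$ is $G$-cr.

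To do this, let $P$ be an R-parabolic subgroup of $G$ with $H\subseteq P$; write $P=P_\lambda$ with $\lambda\in Y(G)$, set $L=L_\lambda$, and let $c_\lambda\colon P\to L$ be the canonical projection with kernel $R_u(P)$. I would proceed in two steps. First, $H\cap R_u(P)=1$: it is a normal unipotent subgroup of $H$ whose identity component lies in $R_u(H^0)=1$, so it is finite, and a finite unipotent group in characteristic~$0$ is trivial. Consequently $c_\lambda$ restricts to an isomorphism from $H$ onto $\ovl{H}:=c_\lambda(H)\subseteq L$; in particular $\ovl{H}\iso H$ is reductive, and $H$ is a complement to $R_u(P)$ in the subgroup $c_\lambda^{-1}(\ovl{H})=\ovl{H}\ltimes R_u(P)$. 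Second, all complements to $R_u(P)$ in $\ovl{H}\ltimes R_u(P)$ are $R_u(P)$-conjugate: since ${\rm char}(k)=0$, the reductive group $\ovl{H}$ is linearly reductive, $R_u(P)$ admits a filtration by $\ovl{H}$-stable closed connected subgroups whose successive quotients are vector groups carrying linear $\ovl{H}$-actions, and the vanishing of $H^1$ of a linearly reductive group with coefficients in such modules gives the conjugacy of complements by the usual induction on the length of the filtration (this step can equally well be taken from the literature; it underlies Serre's treatment in \cite{serre2}, and complement-conjugacy statements of this kind also appear in \cite{BMR}). Thus there is $u\in R_u(P)$ with $uHu^{-1}=\ovl{H}\subseteq L$, so $H\subseteq u^{-1}Lu$. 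Finally $u^{-1}Lu=u^{-1}L_\lambda u=L_{u^{-1}\cdot\lambda}$, and since $u\in R_u(P)\subseteq P$ we have $P_{u^{-1}\cdot\lambda}=u^{-1}Pu=P$; hence $u^{-1}Lu$ is an R-Levi subgroup of $P$ containing $H$. As $P$ was an arbitrary R-parabolic subgroup containing $H$, this shows $H$ is $G$-cr.

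The main obstacle is the second step, namely the conjugacy of complements to $R_u(P)$. This is precisely where characteristic~$0$ is essential (together with the triviality of finite unipotent groups used in the first step). In positive characteristic such complement-conjugacy can fail---this is the phenomenon that $G$-complete reducibility is designed to capture---so one cannot expect a characteristic-free argument along these lines.
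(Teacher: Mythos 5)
Your proof is correct, and its overall route coincides with the paper's: since $\mathrm{char}(k)=0$ makes $k$ perfect, Proposition~\ref{prop:ratgeom} reduces the statement to the case $k=\ovl{k}$, where one invokes the equivalence between $G$-complete reducibility and reductivity. The only difference is that the paper simply cites \cite[Prop.~4.2]{serre2} and \cite{BMR} for that equivalence, whereas you unfold the nontrivial direction (reductive $\Rightarrow$ $G$-cr over $\ovl{k}$) by the standard argument behind those citations: $H\cap R_u(P)$ is trivial in characteristic $0$, and linear reductivity of $H$ forces conjugacy of complements to $R_u(P)$ via vanishing of $H^1$.
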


\begin{proof}
 If $k= \ovl{k}$ then this is well known (see \cite[Prop.\ 4.2]{serre2}, \cite[\S 2.2, \S 6.3]{BMR}, for example).  The result for arbitrary $k$ now follows from Proposition~\ref{prop:ratgeom}.
\end{proof}

Recall that if $S$ is a $k$-split torus of $G$, then $C_G(S)$ is an R-Levi $k$-subgroup of $G$ \cite[Lem. 2.5]{cochar}.
Part (i) of the next result gives the converse, and part (ii) strengthens \cite[Cor.~9.7(ii)]{cochar}: we do not need the hypotheses that $H$ and $C_G(H)$ are $k$-defined.
See also \cite[Prop.~3.2]{serre2}.

\begin{prop}
\label{prop:levi_ascent_descent}
Let $L$ be an R-Levi $k$-subgroup of $G$ and let $H$ be a subgroup of $L$.
\begin{itemize}
\item[(a)] There exists a $k$-split torus $S$ in $G$ such that $L = C_G(S)$.
\item[(b)] $H$ is $G$-completely reducible over $k$ if and only if $H$ is $L$-completely reducible over $k$.
\end{itemize}  
\end{prop}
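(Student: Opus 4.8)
My plan is to handle the two parts separately, though they interact. For part (a), I would start from the structure of R-Levi subgroups: by definition $L = L_\lambda = C_G(\mathrm{Im}(\lambda))$ for some cocharacter $\lambda$, but the issue is that $\lambda$ need not be $k$-defined even when $L$ is (as the excerpt warns via \cite[Rem.~2.4]{GIT}). Since $L$ is an R-Levi $k$-subgroup, I can pick a maximal $k$-torus $T$ of $L$; the excerpt tells us there is $\mu \in Y_{k_s}(T)$ with $L = L_\mu$. The natural candidate for $S$ is the maximal $k$-split subtorus of the centre of $L$, or equivalently the maximal $k$-split subtorus $S$ of $T$ that is central in $L$. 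One then must show $C_G(S) = L$: the inclusion $L \subseteq C_G(S)$ is clear since $S$ is central in $L$, and for the reverse inclusion I would argue that $C_G(S)$ is an R-Levi $k$-subgroup (by \cite[Lem.~2.5]{cochar}) containing $L$, hence compare dimensions or ranks, using that the anisotropic part of the central torus of $L$ cannot be "split off" by any cocharacter of $G$. The key point is that over a possibly non-perfect $k$, "$k$-split" behaves well under $C_G(-)$ but the central torus of $L$ may have an anisotropic factor; I need $S$ to capture exactly the $k$-split directions, and the fact that $L = L_\mu$ with $\mu \in Y_{k_s}(T)$ lets me control things after passing to $k_s$ and then descending.

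For part (b), the forward implication ($G$-cr over $k$ $\Rightarrow$ $L$-cr over $k$) is the easier direction: given an R-parabolic $k$-subgroup $Q$ of $L$ containing $H$, I want to produce an R-parabolic $k$-subgroup $P$ of $G$ with $Q = P \cap L$ in a compatible way, so that an R-Levi $k$-subgroup of $P$ containing $H$ intersects $L$ in an R-Levi $k$-subgroup of $Q$ containing $H$. This uses the standard correspondence between R-parabolics of $L$ and R-parabolics of $G$ containing $L$ (or containing the torus $S$ from part (a)); the $k$-defined bookkeeping should go through because $S$ is $k$-split and cocharacters of $L$ extend to cocharacters of $G$ living in the centraliser picture. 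The reverse implication ($L$-cr over $k$ $\Rightarrow$ $G$-cr over $k$) is where I expect the real work: the natural route is via the GIT criterion Theorem~\ref{thm:GIT_crit}. Fix a generic tuple $\mathbf{h} \in H^m$ for $H$ (a generic tuple in $L$ is automatically generic in $G$ since $H \subseteq L$ and the subalgebra of $M_n$ generated by $H$ is the same). If $H$ is $L$-cr over $k$ then $L(k)\cdot\mathbf{h}$ is cocharacter-closed over $k$ in $L^m$; I must upgrade this to: $G(k)\cdot\mathbf{h}$ is cocharacter-closed over $k$ in $G^m$.

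The main obstacle is exactly this last upgrade. The clean way is to invoke the transitivity/lifting behaviour of cocharacter-closed orbits along R-Levi subgroups — essentially the statement that if $L = C_G(S)$ for a $k$-split torus $S$, then a $G(k)$-orbit meeting $L^m$ is cocharacter-closed over $k$ iff the corresponding $L(k)$-orbit is. This should follow from the machinery of \cite{cochar}: given $\lambda \in Y_k(G)$ with $\lim_{a\to 0}\lambda(a)\cdot\mathbf{h}$ existing, one uses that $\mathbf{h} \in L^m = C_G(S)^m$ to replace $\lambda$ by a $G(k)$-conjugate (or to project) so that its limit-taking happens "inside $L$", i.e. to reduce to a cocharacter of $L$; here one needs $S$ to be $k$-split precisely so that the conjugating/adjusting elements can be taken in $G(k)$ rather than merely $G(\ovl{k})$. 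I would structure the argument as: (1) generic tuples transfer between $H\subseteq L$ and $H \subseteq G$; (2) cite or reprove the fact that for an R-Levi $k$-subgroup $L = C_G(S)$ with $S$ a $k$-split torus (part (a)), cocharacter-closedness over $k$ in $G^m$ for orbits meeting $L^m$ is detected in $L^m$; (3) combine with Theorem~\ref{thm:GIT_crit} applied in $G$ and in $L$. Step (2) is the crux and is presumably close to \cite[Cor.~9.7(ii)]{cochar}, but — as the excerpt emphasises — we want to drop the hypotheses that $H$ and $C_G(H)$ be $k$-defined, so I would verify that the GIT arguments of \cite{cochar} only used $k$-definedness of $S$ (which we have for free) and not of $H$; if they did use it essentially, the fix is to run the whole argument at the level of the orbit of $\mathbf{h}$, whose stabiliser is $C_G(H) = G_{\mathbf{h}}$, and observe that the relevant destabilising-cocharacter constructions never require $G_{\mathbf{h}}$ to be smooth or $k$-defined.
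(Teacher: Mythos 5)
Your plan for part (b) matches the paper's: one fixes a generic tuple, applies Theorem~\ref{thm:GIT_crit} in both $G$ and $L$, and reduces the problem to the transfer of cocharacter-closure along the R-Levi subgroup $L = C_G(S)$ with $S$ a $k$-split torus; the paper simply cites \cite[Thm.~5.4(ii)]{cochar} for that transfer rather than re-proving it, and your observation that generic tuples for $H \subseteq L$ and for $H \subseteq G$ coincide, and that $k$-definedness of $H$ or $C_G(H)$ is never invoked, is exactly the point the paper is making when it says it strengthens \cite[Cor.~9.7(ii)]{cochar}. So (b) is essentially correct as a sketch.

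For part (a) there is a real gap. Your candidate $S$ (the maximal $k$-split subtorus of $Z(L)^0$) is indeed correct, but the reverse inclusion $C_G(S) \subseteq L$ is not established by the argument you outline. Comparing dimensions or ranks of $L$ and $C_G(S)$ does not obviously yield a contradiction, because a priori $C_G(S)$ could be a strictly larger R-Levi $k$-subgroup; the slogan that ``the anisotropic part of $Z(L)^0$ cannot be split off'' is the conclusion you want, not an argument for it. The clean route, which is the one the paper takes, is to make a different choice: write $L = C_G(\mathrm{Im}(\lambda))$ with $\lambda \in Y_{k_s}(G)$, let $\lambda_1,\dots,\lambda_r$ be the $\mathrm{Gal}(k_s/k)$-conjugates of $\lambda$, and let $S$ be the subtorus of $Z(L)^0$ generated by the $\mathrm{Im}(\lambda_i)$. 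Since $L$ is $k$-defined, each $\lambda_i$ also satisfies $C_G(\mathrm{Im}(\lambda_i)) = L$, so $L \subseteq C_G(S) \subseteq C_G(\mathrm{Im}(\lambda_1)) = L$; Galois-stability of the orbit makes $S$ $k$-defined; and $S$ is $k$-split because it is a quotient of a split torus via $\lambda_1 \times \cdots \times \lambda_r$ \cite[III.8.4~Cor.]{borel}. With this $S$ in hand, the equality $C_G(S_{\max}) = L$ for your maximal $k$-split $S_{\max}$ does follow (since $S \subseteq S_{\max}$ forces $C_G(S_{\max}) \subseteq C_G(S) = L$), but it is derived from, rather than a substitute for, the Galois-conjugate construction. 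You should either adopt that construction directly or supply a genuine proof that $C_G(S_{\max}) \subseteq L$.
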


\begin{proof}
(a).  We can choose $\lambda\in Y_{k_s}(G)$ such that $L= C_G({\rm Im}(\lambda))$.  
Let $\lambda= \lambda_1, \lambda_2,\ldots, \lambda_r\in Y_{k_s}(G)$ be the ${\rm Gal}(k_s/k)$-conjugates of $\lambda$ and let $S$ be the subtorus of $Z(L)^0$ generated by the subtori ${\rm Im}(\lambda_i)$.  Then $S$ is $k$-defined and $L= C_G(S)$.  The product map $\lambda_1\times\cdots \times \lambda_r$ gives an epimorphism from $\ovl{k}^*\times\cdots \times \ovl{k}^*$ onto $S$.  But a quotient of a split $k$-torus is $k$-split \cite[III.8.4 Cor.]{borel}, so $S$ is split.  

(b). Given (a), the result now follows from Theorem~\ref{thm:GIT_crit} together with \cite[Thm.\ 5.4(ii)]{cochar}.
\end{proof}

We finish the section with some results involving non-connected reductive groups which are needed in the sequel.  
Note that if $Q$ is an R-parabolic $k$-subgroup of $G$ and $M$ is an R-Levi $k$-subgroup of $Q$ then $Q^0$ is a parabolic $k$-subgroup of $G^0$ and $M^0$ is a Levi $k$-subgroup of $Q^0$; see \cite[Sec.\ 6]{BMR}.

\begin{lem}
\label{lem:Levi_tor}
 Let $P$ be an R-parabolic subgroup of $G$ and let $T$ be a maximal torus of $P$.  Then there is a unique R-Levi subgroup $L$ of $P$ such that $T\subseteq L$.  If $P$ and $T$ are $k$-defined then $L$ is $k$-defined.
\end{lem}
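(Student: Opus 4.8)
The plan is to reduce everything to the connected case via the structure of R-parabolic and R-Levi subgroups. Recall that an R-Levi subgroup $L$ of $P$ is by definition of the form $L_\mu = C_G(\Im(\mu))$ for some $\mu \in Y(G)$ with $P = P_\mu$. The key fact to exploit (from \cite[Sec.~6]{BMR}, as cited just before the lemma) is that any two R-Levi subgroups of $P$ are conjugate by an element of $R_u(P)$, and moreover $L \cap R_u(P) = \{1\}$ with $P = L \ltimes R_u(P)$ (Lemma~\ref{lem:parprops}(i)).

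\emph{Existence.} First I would produce an R-Levi subgroup $L_0$ of $P$ containing \emph{some} maximal torus, and then conjugate it by $R_u(P)$ to contain $T$. Concretely: $T$ is a maximal torus of $P$, hence $T R_u(P)/R_u(P)$ is a maximal torus of $P/R_u(P)$, and if $L_0$ is any R-Levi subgroup of $P$ then $c_{L_0}(T)$ is a maximal torus of $L_0$. Since all maximal tori of $L_0$ are $L_0(\ovl{k})$-conjugate, and $L_0 \hookrightarrow P/R_u(P)$, there is $u \in R_u(P)$ with $c_{L_0}(uTu^{-1}) = c_{L_0}(T)$ — actually I want $uTu^{-1} \subseteq L_0$, equivalently $uTu^{-1}$ maps isomorphically onto a torus of $L_0$ and lies in $L_0$; this follows because $T$ and any maximal torus of $L_0$ are maximal tori of $P$, hence $P(\ovl{k})$-conjugate, and by Lemma~\ref{lem:parprops}(ii) the conjugating element can be taken modulo $L_0$-conjugacy to lie in $R_u(P)$. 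Then $L := u^{-1} L_0 u$ is an R-Levi subgroup of $P$ containing $T$.

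\emph{Uniqueness.} Suppose $L$ and $L'$ are both R-Levi subgroups of $P$ containing $T$. By Lemma~\ref{lem:parprops}(ii), $L' = u L u^{-1}$ for some $u \in R_u(P)(\ovl{k})$. Then $T \subseteq L'$ and $u^{-1} T u \subseteq L$, so both $T$ and $u^{-1} T u$ are maximal tori of $L$; hence they are conjugate in $L$. Combining, $u$ normalises... — more carefully, the element $u$ together with the $L$-conjugacy gives an element of $N_P(T)$ lying in $R_u(P) \cdot L$, and one shows this forces $u \in L$, whence $L' = L$. The cleanest way: $T$ and $u^{-1}Tu$ are $L(\ovl{k})$-conjugate, say $v^{-1}(u^{-1}Tu)v = T$ with $v \in L$, so $uv \in N_P(T)$. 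Now $N_P(T) \subseteq N_P(C_P(T)^0)$ and one checks $N_P(T) = N_L(T) \cdot (N_P(T) \cap R_u(P))$; but $R_u(P)$ is connected unipotent and acts on $T$ trivially on... no — rather, $N_{R_u(P)}(T)$: a unipotent element normalising a torus centralises it (in a connected solvable context this is standard), and $C_{R_u(P)}(T) \subseteq C_P(T)$; since $C_P(T)$ is contained in an R-Levi subgroup (it is its own, being $L_\nu$-like) and meets $R_u(P)$ trivially, we get $N_{R_u(P)}(T) = \{1\}$. Hence $uv \in N_L(T) \subseteq L$, so $u \in L$, giving $L' = L$.

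\emph{The $k$-rational statement.} If $P$ and $T$ are $k$-defined, I would invoke: since $T$ is a maximal $k$-torus of $P$, there is a maximal $k$-split subtorus $S$ of $T$ with $C_P(S)$ an R-Levi $k$-subgroup of $P$ (using \cite[Lem.~2.5]{cochar} and the remark preceding Proposition~\ref{prop:levi_ascent_descent}), in fact one gets $\mu \in Y_{k_s}(T)$ with $P = P_\mu$, so $L_\mu$ is defined over $k_s$; then Galois descent together with the \emph{uniqueness} just proved — the Galois group permutes the R-Levi subgroups of $P$ containing $T$, but there is only one — forces $L$ to be $\mathrm{Gal}(k_s/k)$-stable, hence $k$-defined. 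This "uniqueness forces Galois-stability" trick is the standard route and is clean once uniqueness is in hand.

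\emph{Main obstacle.} The genuine work is in the uniqueness argument, specifically the claim $N_{R_u(P)}(T) = \{1\}$, i.e.\ that a nontrivial element of the unipotent radical cannot normalise the maximal torus $T$. For connected $P$ this is classical (the normaliser of a maximal torus meets the unipotent radical trivially); for non-connected $G$ one works with $P^0$, which is a parabolic subgroup of $G^0$ with the same unipotent radical, so the classical fact applies verbatim. Everything else is bookkeeping with the semidirect product decomposition and conjugacy of maximal tori, both supplied by Lemma~\ref{lem:parprops}.
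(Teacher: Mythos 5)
Your proof of the $k$-rationality statement is exactly the paper's: once uniqueness of the R-Levi $L \supseteq T$ is in hand, Galois-stability of $L$ follows and hence $L$ is $k$-defined. Where you differ is that the paper disposes of the first assertion (existence and uniqueness) with a single citation to \cite[Cor.~6.5]{BMR}, whereas you reprove it from the semidirect product structure $P = L_0 \ltimes R_u(P)$. Your argument is essentially the standard one underlying the cited result, so it is not a new idea, but it does make the lemma self-contained.

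Two small remarks on the details. In the existence paragraph you invoke Lemma~\ref{lem:parprops}(ii) to adjust the conjugating element to lie in $R_u(P)$; what you actually need there is the factorisation $P = L_0 R_u(P)$, i.e.\ part (i) of that lemma (write $p = u\ell$ with $u \in R_u(P)$, $\ell \in L_0$, and absorb $\ell$ into a different maximal torus of $L_0$). Part (ii) is about $R_u(P)(k)$-conjugacy of R-Levi \emph{$k$-subgroups}, which is not what is being used. In the uniqueness paragraph your parenthetical ``$C_P(T)$ is its own [R-Levi], being $L_\nu$-like'' is not quite right: since $T$ is a maximal torus of $P$, it is a maximal torus of $G^0$, so $C_{G^0}(T) = T$; from this $C_P(T) \cap R_u(P) \subseteq T \cap R_u(P) = \{1\}$ directly, which is the cleaner route to $N_{R_u(P)}(T) = \{1\}$. (Equivalently: if $T \subseteq L_\mu$ with $P = P_\mu$, then $\Im(\mu) \subseteq Z(L_\mu)^0 \subseteq T$, so anything centralising $T$ lies in $L_\mu$, which meets $R_u(P)$ trivially.) With these slips repaired the argument is sound.
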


\begin{proof}
 The first assertion is \cite[Cor.\ 6.5]{BMR}.  For the second, suppose $P$ and $T$ are $k$-defined. 
 Then the unique R-Levi subgroup $L$ of $P$ containing $T$ must be Galois-stable and hence $k$-defined also. 
 \end{proof}

\begin{lem}
\label{lem:conn_to_nonconn}
 \begin{enumerate}
  \item[(a)] Let $Q$ be an R-parabolic $k$-subgroup of $G$ and set $P= Q^0$.  Then the R-Levi $k$-subgroups of $Q$ are precisely the subgroups of the form $N_Q(L)$ for $L$ a Levi $k$-subgroup of $P$. 
  \item[(b)] Let $Q$, $P$ be as in (a) and let $H$ be a subgroup of $P$.  Then $H$ is contained in an R-Levi $k$-subgroup of $Q$ if and only if $H$ is contained in a Levi $k$-subgroup of $P$.  Moreover, if $L$ is a Levi $k$-subgroup of $P$ then $c_{N_Q(L)}(H)$ is $N_Q(L)$-completely reducible over $k$ if and only if $c_L(H)$ is $L$-completely reducible over $k$. 
  \item[(c)] Let $H$ be a subgroup of $G^0$.  Then $H$ is $G$-completely reducible over $k$ if and only if $H$ is $G^0$-completely reducible over $k$.
 \end{enumerate}
\end{lem}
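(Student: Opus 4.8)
The plan is to prove part~(a) first, to deduce part~(c) from it, and then to obtain part~(b) from (a) and (c) together. The engine for part~(a) is the identity $N_{P_\lambda}(L_\lambda^0)= L_\lambda$, valid for every $\lambda\in Y(G)$. One inclusion is trivial, since $L_\lambda$ normalises its own identity component; for the reverse I would use the Levi decomposition $P_\lambda= L_\lambda\ltimes R_u(P_\lambda)$ of Lemma~\ref{lem:parprops}(i) to reduce to showing $N_{R_u(P_\lambda)}(L_\lambda^0)= 1$. If $v\in R_u(P_\lambda)$ normalises $L_\lambda^0$, then for each $l'\in L_\lambda^0$ the commutator $[v,l']$ lies in $L_\lambda^0\cap R_u(P_\lambda)= 1$, so $v$ centralises $L_\lambda^0$, in particular $v$ centralises $\Im(\lambda)$; but then $\lambda(a)v\lambda(a)^{-1}= v$ for all $a$, which together with $\lim_{a\to 0}\lambda(a)v\lambda(a)^{-1}= e$ (valid since $R_u(P_\lambda)= \ker c_\lambda$) forces $v= e$.

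For part~(a), recall that an R-parabolic $k$-subgroup $Q$ always has a $k$-defined R-Levi subgroup (the unique R-Levi containing a maximal $k$-torus of $Q$, by Lemma~\ref{lem:Levi_tor}), and that for an appropriate such choice one can write $Q= P_\lambda$ with $\lambda\in Y_{k_s}(G)$ and $L_\lambda$ a $k$-defined R-Levi subgroup of $Q$; moreover $L_\lambda^0$ is then a Levi $k$-subgroup of $P:= Q^0$. Given an arbitrary R-Levi $k$-subgroup $M$ of $Q$, applying this with $M$ in the role of $L_\lambda$ shows $L:= M^0$ is a Levi $k$-subgroup of $P$, and the identity above gives $N_Q(L)= N_{P_\lambda}(L_\lambda^0)= L_\lambda= M$. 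Conversely, given any Levi $k$-subgroup $L$ of $P$, I would fix $\lambda$ as above and use conjugacy of Levi $k$-subgroups inside $G^0$ (the connected-group instance of Lemma~\ref{lem:parprops}(ii)) to write $L= uL_\lambda^0u^{-1}$ with $u\in R_u(P)(k)= R_u(Q)(k)$; then $N_Q(L)= u N_Q(L_\lambda^0)u^{-1}= uL_\lambda u^{-1}= L_{u\cdot\lambda}$, which is an R-Levi subgroup of $P_{u\cdot\lambda}= uQu^{-1}= Q$, and it is $k$-defined since it is Galois-stable and, being an R-Levi subgroup, reductive and hence smooth. It matters here that one cannot simply take $Q= N_G(P)$: distinct R-parabolic subgroups of $G$ may share the same identity component, so the argument must track the given $Q$ through the cocharacter $\lambda$.

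For part~(c), assume $H\subseteq G^0$. If $H$ is $G^0$-cr over $k$ and $Q$ is an R-parabolic $k$-subgroup of $G$ with $H\subseteq Q$, then $H\subseteq Q\cap G^0= Q^0=: P$ (the intersection is the parabolic subgroup of $G^0$ cut out by a cocharacter defining $Q$, hence connected); so there is a Levi $k$-subgroup $L$ of $P$ with $H\subseteq L\subseteq N_Q(L)$, and $N_Q(L)$ is an R-Levi $k$-subgroup of $Q$ by part~(a). Conversely, if $H$ is $G$-cr over $k$ and $P'$ is a parabolic $k$-subgroup of $G^0$ containing $H$, then $N_G(P')$ is an R-parabolic $k$-subgroup of $G$ with $N_G(P')^0= P'$ (since $N_{G^0}(P')= P'$, parabolic subgroups of connected groups being self-normalising); so there is an R-Levi $k$-subgroup $M'$ of $N_G(P')$ with $H\subseteq M'$, and since $H\subseteq G^0$ we get $H\subseteq M'\cap G^0= (M')^0$, a Levi $k$-subgroup of $P'$. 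Throughout I use the elementary fact $M\cap G^0= M^0$ for any R-Levi subgroup $M$, which holds because the centraliser of a torus in a connected group is connected.

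Part~(b) then follows formally. For the first assertion: if $H$ lies in a Levi $k$-subgroup $L$ of $P$ it lies in $N_Q(L)$, an R-Levi $k$-subgroup of $Q$ by part~(a); conversely if $H$ lies in an R-Levi $k$-subgroup $M$ of $Q$ then, as $H\subseteq P\subseteq G^0$, it lies in $M\cap G^0= M^0$, a Levi $k$-subgroup of $P$. For the second assertion, the key point is that $R_u(Q)= R_u(Q^0)= R_u(P)$, so the Levi decompositions $P= L\ltimes R_u(P)$ and $Q= N_Q(L)\ltimes R_u(Q)$ are compatible; since $L\subseteq N_Q(L)$ the two projections agree on $H$, so $c_L(H)= c_{N_Q(L)}(H)$ is one and the same subgroup $H'$ of $L= N_Q(L)^0$, and applying part~(c) to the reductive group $N_Q(L)$ gives that $H'$ is $N_Q(L)$-cr over $k$ if and only if it is $L$-cr over $k$. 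I expect part~(a) to be the main obstacle: showing that $N_Q(L)$ is genuinely an R-Levi subgroup of $Q$, and not merely a subgroup with the correct identity component, is what forces the conjugation-by-$u$ manoeuvre, and keeping track of $k$-rationality while only cocharacters over $k_s$ are available needs care; once (a) is in place, parts (b) and (c) are essentially routine unwindings of the definition of $G$-complete reducibility over $k$.
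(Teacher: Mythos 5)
Your proof is correct and reaches the same conclusions, but the route through part (a) is genuinely different from the paper's. The paper picks a maximal torus $T$ of the given Levi $k$-subgroup $L$ of $P$, invokes Lemma~\ref{lem:Levi_tor} to produce the unique R-Levi subgroup $M$ of $Q$ containing $T$, and then identifies $M$ with $N_Q(L)$ by a ``meets every component'' argument, also using Lemma~\ref{lem:Levi_tor} at the end to transfer $k$-definedness between $L$ and $N_Q(L)$. You instead prove the identity $N_{P_\lambda}(L_\lambda^0)=L_\lambda$ from scratch via the decomposition $P_\lambda=L_\lambda\ltimes R_u(P_\lambda)$ of Lemma~\ref{lem:parprops}(i) and a commutator computation, and then handle an arbitrary Levi $k$-subgroup of $P$ by $R_u(Q)(k)$-conjugation using Lemma~\ref{lem:parprops}(ii). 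Your version is more self-contained (the paper's $N_P(M^0)^0=M^0$ is essentially your normaliser identity, cited rather than proved), at the cost of some bookkeeping with $u\cdot\lambda$. One small imprecision: your justification that $N_Q(L)$ is $k$-defined because it is ``Galois-stable and \dots\ smooth'' is not quite the right criterion over an imperfect field; what you actually need, and what your construction already provides, is that $N_Q(L)=L_{u\cdot\lambda}$ with $u\cdot\lambda\in Y_{k_s}(G)$ is defined over $k_s$ and Galois-stable, whence $k$-defined by descent. Parts (b) and (c) are the same argument as the paper, merely reordered (the paper derives (c) from the first assertion of (b), while you prove (c) directly from (a) and then deduce (b)); your explicit caution that one cannot replace $Q$ by $N_G(P)$ is a useful point the paper leaves implicit.
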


\begin{proof}
 (a) As observed above, if $M$ is an R-Levi subgroup of $Q$ then $M^0$ is a Levi subgroup of $P$, and $N_Q(M^0)^0= N_P(M^0)^0= M^0$.  Let $L$ be a Levi subgroup of $P$ and let $T$ be a maximal torus of $L$.  By Lemma~\ref{lem:Levi_tor} there is a unique R-Levi subgroup $M$ of $Q$ such that $T\subseteq M$.  The Levi subgroups $M^0$ and $L$ of $P$ both contain $T$, so by Lemma~\ref{lem:Levi_tor} they are equal; in particular, $M$ normalizes $L$.  Now $N_Q(T)$ normalizes $L$ by Lemma~\ref{lem:Levi_tor}, so $N_Q(L)$ meets every component of $Q$.  Since $Q= M\ltimes R_u(Q)$, $M$ also meets every component of $Q$.  It follows that $M= N_Q(L)$.  Finally, $L$ contains a maximal $k$-torus of $P$ if and only if $N_Q(L)$ does, so $L$ is $k$-defined if and only if $N_Q(L)$ is, by Lemma~\ref{lem:Levi_tor}.\smallskip\\
 (b) The first assertion follows immediately from (a), and part (c) now follows.  For the second assertion of (b), note that the restriction of $c_{N_Q(L)}(H)$ to $P$ is $c_L$; the desired result now follows from part (c) applied to the reductive $k$-group $N_Q(L)$.
\end{proof}

\section{$k$-semisimplification}
\label{sec:ss}

Now we come to our main definition.

\begin{defn}
\label{defn:ss}
 Let $H$ be a subgroup of $G$.  We say that a subgroup $H'$ of $G$ is a \emph{$k$-semisimplification of $H$ (for $G$)} if there exist an R-parabolic $k$-subgroup $P$ of $G$ and an R-Levi $k$-subgroup $L$ of $P$ such that $H\subseteq P$, $H'= c_L(H)$ and $H'$ is $G$-completely reducible (or equivalently by Proposition~\ref{prop:levi_ascent_descent}(ii), $L$-completely reducible) over $k$.  We say \emph{the pair $(P,L)$ yields $H'$}.
\end{defn}

\begin{rems}
\label{rem:ss}
 \begin{enumerate}
  \item[(a)] Let $H$ be a subgroup of $G$.  If $H$ is $G$-cr over $k$ then clearly $H$ is a $k$-semisimplification of itself, yielded by the pair $(G,G)$.  
  \item[(b)] Suppose $(P,L)$ yields a $k$-semisimplification $H'$ of $H$.  Let $L_1$ be another R-Levi $k$-subgroup of $P$.  Then $L_1= uLu^{-1}$ for some $u\in R_u(P)(k)$, so $c_{L_1}(H)= uc_L(H)u^{-1}$.  Hence $(P,L_1)$ also yields a $k$-semisimplification of $H$.  We say that \emph{$P$ yields a $k$-semisimplification of $H$}.
  \item[(c)] It is straightforward to check that if $\phi$ is an automorphism of $G$ (as a $k$-group), $H$ is a subgroup of $G$ and $(P,L)$ yields a $k$-semisimplification $H'$ of $H$ then $\phi(H')$ is a $k$-semisimplification of $\phi(H)$, yielded by $(\phi(P), \phi(L))$.
  \item[(d)] For $G$ connected and $H$ a subgroup of $G(k)$, 
 Definition~\ref{defn:ss} recovers Serre's ``$G$-analogue" of a semisimplification from \cite[\S 3.2.4]{serre2}.  For $k= \ovl{k}$, Definition~\ref{defn:ss} generalizes the definition of ${\mathcal D}(H)$ following \cite[Lem.\ 4.1]{genred}.
 \end{enumerate}
\end{rems}

\begin{rem}
 Let ${\mathbf h}= (h_1,\ldots, h_m)\in H^m$ be a generic tuple for $H$.  Note that $c_\lambda$ extends in the obvious way to a homomorphism from a parabolic subalgebra ${\mathcal P}_\lambda$ of $M_n$ onto a Levi subalgebra ${\mathcal L}_\lambda$ of ${\mathcal P}_\lambda$, and ${\mathcal P}_\lambda$ contains the subalgebra ${\mathcal A}$ generated by $H$.  Since the elements $h_i$ generate ${\mathcal A}$, the elements $c_\lambda(h_i)$ generate $c_\lambda({\mathcal A})$.  But $c_\lambda({\mathcal A})$ is the subalgebra of ${\mathcal L}_\lambda$ generated by $c_\lambda(H)$, so we deduce that $c_\lambda({\mathbf h})= (c_\lambda(h_1),\ldots, c_\lambda(h_m))$ is a generic tuple for $c_\lambda(H)$.  Hence by Theorem~\ref{thm:GIT_crit}, $c_\lambda(H)$ is a $k$-semisimplification of $H$ if and only if $G(k)\cdot c_\lambda({\mathbf h})$ is cocharacter-closed over $k$.  It follows from Theorem~\ref{thm:RHMT} that $H$ admits at least one $k$-semisimplification: for we can choose $\lambda\in Y_k(G)$ such that $G(k)\cdot c_\lambda({\mathbf h})$ is cocharacter-closed over $k$, so $c_\lambda(H)$ is a $k$-semisimplification of $H$, yielded by $(P_\lambda, L_\lambda)$.
\end{rem}

\begin{lem}
\label{lem:k-cochar}
Suppose that $H'$ is a $k$-semisimplification of $H$.
Then there is $\lambda\in Y_k(G)$ such that $H'$ is yielded by the pair $(P_\lambda,L_\lambda)$.
\end{lem}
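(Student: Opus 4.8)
The plan is to start from the pair $(P,L)$ that yields $H'$ by Definition~\ref{defn:ss}, and upgrade it to a pair of the form $(P_\lambda, L_\lambda)$ with $\lambda \in Y_k(G)$ without changing the subgroup $c_L(H)$ up to $R_u(P)(k)$-conjugacy. The obstacle flagged in Section~\ref{sec:cochar} is precisely that a $k$-defined R-parabolic subgroup $P$ need not be of the form $P_\mu$ for any $\mu \in Y_k(G)$ — one only gets such a $\mu$ in $Y_{k_s}(T)$ for a maximal $k$-torus $T$ of $L$. So a naive ``pick $\lambda$ with $P = P_\lambda$'' argument fails, and this is the main difficulty to navigate.

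First I would recall that, since $P$ is a $k$-defined R-parabolic subgroup and $L$ a $k$-defined R-Levi subgroup of $P$, we may choose a maximal $k$-torus $T$ of $L$; then by the result quoted in Section~\ref{sec:cochar} there exists $\mu \in Y_{k_s}(T)$ with $P = P_\mu$ and $L = L_\mu$. The issue is that $\mu$ is only defined over the separable closure $k_s$. Here I expect the right move is to invoke a generic-tuple/GIT reformulation rather than trying to fix $P$ on the nose: let ${\mathbf h} \in H^m$ be a generic tuple for $H$, so ${\mathbf h} \in P_\mu^m$ and $c_\mu({\mathbf h})$ is a generic tuple for $c_L(H) = H'$ (as in the Remark following Definition~\ref{defn:ss}). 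Since $H'$ is $G$-cr over $k$, Theorem~\ref{thm:GIT_crit} tells us $G(k)\cdot c_\mu({\mathbf h})$ is cocharacter-closed over $k$. By Example~\ref{ex:accessible_tuple} this orbit is accessible from ${\mathbf h}$ over $k_s$, but I want accessibility over $k$.

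The key step is therefore to produce a genuinely $k$-defined cocharacter doing the job. I would apply the Rational Hilbert--Mumford Theorem (Theorem~\ref{thm:RHMT}) to $X = G^m$ and $x = {\mathbf h}$: there is a unique $G(k)$-orbit ${\mathcal O}$ that is cocharacter-closed over $k$ and accessible from ${\mathbf h}$ over $k$, witnessed by some $\lambda \in Y_k(G)$ with $c_\lambda({\mathbf h}) \in {\mathcal O}$. Then $c_\lambda(H)$ is a $k$-semisimplification of $H$ yielded by $(P_\lambda, L_\lambda)$, as noted in that Remark. It remains to identify $c_\lambda(H)$ with $H'$ up to $G(k)$-conjugacy, and then transport $\lambda$ accordingly. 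For this I would invoke the uniqueness of the $k$-semisimplification up to $G(k)$-conjugacy (Theorem~\ref{thm:main}, stated in the introduction): both $c_\lambda(H)$ and $H'$ are $k$-semisimplifications of $H$, so there is $g \in G(k)$ with $g H' g^{-1} = c_\lambda(H)$; equivalently $H' = c_{g^{-1}\cdot\lambda}(g^{-1} H g)$. One then has to massage this back: conjugating, $H \subseteq g^{-1}P_\lambda g = P_{g^{-1}\cdot \lambda}$, and $H' = c_{g^{-1}\cdot\lambda}(H)$ up to $R_u$-conjugacy — but we need $H$ itself, not a conjugate, inside the new R-parabolic. Here I would use that $H \subseteq P$ and $H' = c_L(H)$, compare with the tuple computation, and conclude that $g$ can be chosen to lie in $P(k)$ (indeed in $R_u(P)(k)$, using Lemma~\ref{lem:parprops}(ii) and Remark~\ref{rem:ss}(b)), so that $g^{-1}\cdot\lambda \in Y_k(G)$ is the desired cocharacter with $H \subseteq P_{g^{-1}\cdot\lambda}$ and $c_{g^{-1}\cdot\lambda}(H) = H'$.

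The step I expect to be the main obstacle is exactly this last identification: matching up the a priori different $k$-semisimplifications $c_\lambda(H)$ and $H'$ and showing the conjugating element can be taken inside $P(k)$ (so that $\lambda$ can be corrected to a $k$-cocharacter adapted to the \emph{given} inclusion $H \subseteq P$), rather than merely deducing the existence of \emph{some} $k$-cocharacter yielding \emph{some} $G(k)$-conjugate of $H'$. If a cleaner route is available, it would be to bypass $(P,L)$ entirely: apply Theorem~\ref{thm:RHMT} to get $\lambda \in Y_k(G)$ with $G(k)\cdot c_\lambda({\mathbf h})$ cocharacter-closed over $k$, observe $c_\lambda(H)$ is then a $k$-semisimplification, and invoke Theorem~\ref{thm:main} to conclude $c_\lambda(H)$ is $G(k)$-conjugate to $H'$ — at which point, replacing $\lambda$ by the corresponding $G(k)$-translate, $(P_\lambda, L_\lambda)$ yields $H'$ on the nose. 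This shorter argument only needs the uniqueness theorem plus the generic-tuple machinery, and I would present it as the primary proof.
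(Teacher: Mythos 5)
Your proposal has a circularity problem you may not have noticed: the route you present as the ``primary proof'' invokes Theorem~\ref{thm:main} (uniqueness of $k$-semisimplification up to $G(k)$-conjugacy), but the paper's proof of Theorem~\ref{thm:main} begins by invoking Lemma~\ref{lem:k-cochar}. So you cannot use the uniqueness theorem here; it is downstream of the statement you are trying to prove. Even granting uniqueness, the step you yourself flag as the main obstacle is not actually resolved: if $g\in G(k)$ with $gc_\lambda(H)g^{-1}=H'$, then replacing $\lambda$ by $g\cdot\lambda$ gives $H'=c_{g\cdot\lambda}(gHg^{-1})$, which exhibits $H'$ as a $k$-semisimplification of $gHg^{-1}$, not of $H$; there is no reason $H$ itself should lie in $P_{g\cdot\lambda}$, and the sketch of ``conclude $g$ can be chosen in $P(k)$'' does not supply one.

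The paper's actual argument is far more elementary and entirely avoids the GIT machinery. Starting from $\mu\in Y_{k_s}(T)$ with $P=P_\mu$, $L=L_\mu$ for a maximal $k$-torus $T$ of $L$, one chooses a finite Galois extension $k'/k$ splitting $T$ and sets $\lambda=\sum_{\gamma\in\mathrm{Gal}(k'/k)}\gamma\cdot\mu\in Y_k(T)$. This $\lambda$ is Galois-stable, hence $k$-defined, and a direct check (cf.\ the proof of \cite[Lem.~2.5(ii)]{GIT}) shows $H\subseteq P_\lambda$ and $c_\lambda|_H=c_\mu|_H$, so $(P_\lambda,L_\lambda)$ yields $H'$ on the nose. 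The key point your approach misses is that one does not need $P_\lambda=P$ or $L_\lambda=L$; it is enough that $\lambda$ be $k$-defined and that its limit map agree with $c_\mu$ on $H$, and Galois averaging inside $Y(T)$ achieves exactly that. I would suggest reworking your proof along these lines: stay with the given pair $(P,L)$, and repair the field of definition of the cocharacter by averaging, rather than reaching for Theorem~\ref{thm:RHMT} and Theorem~\ref{thm:main}.
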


\begin{proof}
Suppose $H'$ is yielded by the pair $(P,L)$.  
By the discussion in Section~\ref{sec:cochar}, there exist a maximal $k$-torus $T$ of $L$ and $\mu\in Y_{k_s}(T)$ such that $P= P_\mu$ and $L= L_\mu$.  
Choose a finite Galois extension $k'/k$ such that $T$ splits over $k'$, and let $\lambda= \sum_{\gamma\in {\rm Gal}(k'/k)} \gamma\cdot \mu\in Y_k(T)$.  
One checks easily that $H\subseteq P_\lambda$ and $c_\lambda|_H= c_\mu|_H$ (cf.\ the proof of \cite[Lem.~2.5(ii)]{GIT}).  Hence $(P_\lambda, L_\lambda)$ also yields $H'$.
\end{proof}

Here is our main result, which was proved in the special case $k= \ovl{k}$ in \cite[Prop.~5.14(i)]{GIT}, cf.\ \cite[Prop.~3.3(b)]{serre2}. 
The uniqueness asserted in Theorem~\ref{thm:main} is akin to the theorem of Jordan--H\"older.

\begin{thm}
\label{thm:main}
 Let $H$ be a subgroup of $G$.  Then any two $k$-semisimplifications of $H$ are $G(k)$-conjugate.
\end{thm}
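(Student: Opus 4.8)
The plan is to reduce the statement to a uniqueness assertion about cocharacter-closed orbits, namely Theorem~\ref{thm:RHMT}. Fix a $k$-embedding $\iota\colon G\to\GL_n$ and a generic tuple ${\mathbf h}=(h_1,\ldots,h_m)\in H^m$ for $H$ with respect to $\iota$. Suppose $H'_1$ and $H'_2$ are two $k$-semisimplifications of $H$. By Lemma~\ref{lem:k-cochar} we may choose $\lambda_1,\lambda_2\in Y_k(G)$ such that $(P_{\lambda_i},L_{\lambda_i})$ yields $H'_i$, so that $H'_i=c_{\lambda_i}(H)$ with $c_{\lambda_i}(H)$ being $G$-cr over $k$. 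The first key step is to observe, using the remark following Remarks~\ref{rem:ss}, that ${\mathbf h}'_i:=c_{\lambda_i}({\mathbf h})=(c_{\lambda_i}(h_1),\ldots,c_{\lambda_i}(h_m))$ is a generic tuple for $H'_i=c_{\lambda_i}(H)$. By Example~\ref{ex:accessible_tuple}, the orbit $G(k)\cdot{\mathbf h}'_i$ is accessible from ${\mathbf h}$ over $k$, and by Theorem~\ref{thm:GIT_crit} the hypothesis that $H'_i$ is $G$-cr over $k$ says precisely that $G(k)\cdot{\mathbf h}'_i$ is cocharacter-closed over $k$.

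Now I would invoke the uniqueness in Theorem~\ref{thm:RHMT}: there is exactly one $G(k)$-orbit which is both cocharacter-closed over $k$ and accessible from ${\mathbf h}$ over $k$. Hence $G(k)\cdot{\mathbf h}'_1=G(k)\cdot{\mathbf h}'_2$, so there exists $g\in G(k)$ with $g\cdot{\mathbf h}'_1={\mathbf h}'_2$. The final step is to promote this conjugacy of generic tuples to a conjugacy of the groups $H'_1$ and $H'_2$ themselves. Since ${\mathbf h}'_i$ is a generic tuple for $H'_i$, the entries $c_{\lambda_i}(h_j)$ generate the same subalgebra ${\mathcal A}_i$ of $M_n$ that is generated by $H'_i$; conjugating ${\mathbf h}'_1$ by $g$ sends ${\mathcal A}_1$ onto ${\mathcal A}_2$, and in particular $g H'_1 g^{-1}$ and $H'_2$ generate the same subalgebra of $M_n$. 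One then needs that a $G$-cr (over $k$) subgroup is determined inside this associative envelope; concretely, $g H'_1 g^{-1}=c_{\lambda_2}(gHg^{-1})$ (since $g\in G(k)$ normalizes nothing in particular but conjugates the construction), but more directly: $g\cdot{\mathbf h}'_1={\mathbf h}'_2$ means $g$ conjugates the generating tuple of $H'_1$ to that of $H'_2$, and since both groups equal the (Zariski closure of the) group generated by their respective tuples' entries together with the group structure inherited from $G$—and this subgroup-of-$G$ structure is recovered from the associative algebra it generates together with $G$—we conclude $g H'_1 g^{-1}=H'_2$.

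The step I expect to be the main obstacle is this last one: carefully arguing that $g\cdot{\mathbf h}'_1={\mathbf h}'_2$ forces $gH'_1g^{-1}=H'_2$, rather than merely that the two groups have $G(k)$-conjugate generic tuples. The cleanest route is to note that $H'_i=\overline{\langle (h'_i)_1,\ldots,(h'_i)_m\rangle}$ need not hold on the nose (the $h'_j$ need only generate the \emph{algebra}, not the group), so instead one should track the group directly: write $H'_1=c_{\lambda_1}(H)$ and note $gc_{\lambda_1}(H)g^{-1}=c_{g\cdot\lambda_1}(gHg^{-1})$, and then compare with $H'_2=c_{\lambda_2}(H)$ via the fact that $g\cdot\lambda_1$ and $\lambda_2$ induce the same projection of the algebra generated by a common generic tuple, hence the same projection on $H$ itself after conjugation. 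Alternatively—and this is probably the intended argument—one applies Theorem~\ref{thm:RHMT} not to ${\mathbf h}$ directly but notes that since $c_{\lambda_i}$ restricted to the algebra ${\mathcal A}$ generated by $H$ is an algebra homomorphism, the equality $g\cdot{\mathbf h}'_1={\mathbf h}'_2$ of tuples extends to $g\cdot c_{\lambda_1}({\mathcal A})g^{-1}=c_{\lambda_2}({\mathcal A})$ as a map of algebras, and since $H'_i$ is exactly the set of elements of $G$ lying in ${\mathcal A}_i$ that come from $H$'s closure under the relevant limit—this pins down $gH'_1g^{-1}=H'_2$ uniquely. I would aim to present this via the generic-tuple formalism of \cite{cochar} so that the bookkeeping stays light.
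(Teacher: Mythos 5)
Your opening moves match the paper's exactly: fix a generic tuple $\mathbf{h}$, use Lemma~\ref{lem:k-cochar} to produce $\lambda_1,\lambda_2\in Y_k(G)$ yielding $H'_1,H'_2$, observe that $c_{\lambda_i}(\mathbf{h})$ is a generic tuple for $H'_i$, and invoke the Rational Hilbert--Mumford Theorem to obtain $g\in G(k)$ with $g\cdot c_{\lambda_1}(\mathbf{h})=c_{\lambda_2}(\mathbf{h})$. You also correctly identify the genuine crux: conjugacy of generic tuples does not \emph{immediately} give conjugacy of the groups, since a generic tuple generates the associative envelope, not the group.

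Where you diverge from the paper is in resolving that crux, and your write-up there is muddled. Your first sketch ($gc_{\lambda_1}(H)g^{-1}=c_{g\cdot\lambda_1}(gHg^{-1})$ and ``compare'') does not close the gap, and the phrase ``$H'_i$ is exactly the set of elements of $G$ lying in ${\mathcal A}_i$ that come from $H$'s closure under the relevant limit'' is not a usable statement. However, your ``alternative'' contains the right kernel of an idea, and it can be made to work cleanly: both $\mathrm{Int}(g)\circ c_{\lambda_1}|_{\mathcal A}$ and $c_{\lambda_2}|_{\mathcal A}$ are homomorphisms of associative algebras ${\mathcal A}\to M_n$ (the paper sets this up in the remark preceding Lemma~\ref{lem:k-cochar}); since they agree on the generators $h_1,\ldots,h_m$ of ${\mathcal A}$, they agree on all of ${\mathcal A}$, and in particular on $H\subseteq{\mathcal A}$. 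Hence $gc_{\lambda_1}(h)g^{-1}=c_{\lambda_2}(h)$ for every $h\in H$, so $gH'_1g^{-1}=c_{\lambda_2}(H)=H'_2$ in one stroke. You should state it that plainly; the vague appeal to ``the subgroup-of-$G$ structure recovered from the associative algebra'' is not needed and is not correct as a general principle.

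The paper's own proof takes a different route for this last step: a descending-chain argument. It considers the closed set $C_{\mathbf h}=\{g\in G \mid g\cdot c_{\lambda_1}(\mathbf h)=c_{\lambda_2}(\mathbf h)\}$, which Theorem~\ref{thm:RHMT} shows contains a $k$-point. If the chosen $g$ does not conjugate $H_1$ to $H_2$, there is a witness $h\in H$ with (say) $gc_{\lambda_1}(h)g^{-1}\notin H_2$; replacing $\mathbf h$ by the longer generic tuple $(\mathbf h,h)$ strictly shrinks $C_{\mathbf h}$, and Noetherianity forces termination. So the paper iterates the application of Theorem~\ref{thm:RHMT}, while your (tidied-up) argument applies it once and finishes with the algebra-homomorphism observation. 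Both are valid; yours is arguably shorter once written correctly, while the paper's avoids leaning on the algebra structure beyond what is needed to define generic tuples. Tighten the final paragraph as above and the proof is complete.
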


\begin{proof}
 Let $H_1, H_2$ be $k$-semisimplifications of $H$.  By Lemma~\ref{lem:k-cochar}, there exist $\lambda_1, \lambda_2\in Y_k(G)$ such that $(P_{\lambda_1}, L_{\lambda_1})$ realizes $H_1$ and $(P_{\lambda_2}, L_{\lambda_2})$ realizes $H_2$.  Let ${\mathbf h}\in H^m$ be a generic tuple for $H$.  Then $c_{\lambda_i}({\mathbf h})$ is a generic tuple for $H_i$ for $i= 1,2$, and each orbit $G(k)\cdot c_{\lambda_i}({\mathbf h})$ is cocharacter-closed over $k$ and accessible from ${\mathbf h}$ over $k$ (Example~\ref{ex:accessible_tuple}).  It follows from the uniqueness result in Theorem~\ref{thm:RHMT} that the closed subset $C_{{\mathbf h}}:= \{g\in G\,|\,g\cdot c_{\lambda_1}({\mathbf h})= c_{\lambda_2}({\mathbf h})\}$ contains a $k$-point.
 
 Pick $g\in C_{{\mathbf h}}$.  If $H_2= gH_1g^{-1}$ then we are done.  Otherwise there exists $h\in H$ such that $gc_{\lambda_1}(h)g^{-1}\not\in H_2$ or $g^{-1}c_{\lambda_2}(h)g\not\in H_1$.  Without loss assume the former.  We can repeat the above argument, replacing ${\mathbf h}$ with the generic tuple ${\mathbf h}':= ({\mathbf h}, h)\in H^{m+1}$; note that $C_{{\mathbf h}'}$ is properly contained in $C_{{\mathbf h}}$.  The result now follows by a descending chain condition argument.
\end{proof}

\begin{defn}
 We define ${\mathcal D}_k(H)$ to be the set of $G(k)$-conjugates of any $k$-semisimplification of $H$ (cf.\ the discussion preceding \cite[Thm.\ 1.4]{genred}).  This is well-defined by Theorem~\ref{thm:main}.
\end{defn}

\begin{ex}\label{ex:comments}
 Let $H$ be a subgroup of $G$.  As noted in Remark \ref{rem:ss}(a), if $H$ is $G$-cr over $k$ then $H$ is a $k$-semisimplification of itself, yielded by the pair $(G,G)$.  
 If $H$ is a $G$-ir subgroup of $G$, then $H$ is the only $k$-semisimplification of $H$: this shows that not every element of ${\mathcal D}_k(H)$ need be a $k$-semisimplification of $H$.  
 In a similar vein, if $P$ and $Q$ are arbitrary R-parabolic $k$-subgroups of $G$ and $Q\supseteq P$ then it is easily seen that $Q$ yields a $k$-semisimplification of $P$ if and only if $P^0= Q^0$.
\end{ex}

\begin{ex} 
 Let $H$ be a subgroup of $G$ and let $P$ be minimal among the R-parabolic $k$-subgroups that contain $H$.  Let $L$ be an R-Levi $k$-subgroup of $P$.  We claim that $c_L(H)$ is $L$-ir over $k$ (cf.\ \cite[Prop.~3.3(a)]{serre2} and \cite[Sec.~3]{BMR}); it then follows from Proposition~\ref{prop:levi_ascent_descent}(ii) that $c_L(H)$ is a $k$-semisimplification of $H$.  Suppose $c_L(H)$ is not $L$-ir: say, $c_L(H)\subseteq Q$, where $Q$ is a proper R-parabolic $k$-subgroup of $L$.  There exist a maximal $k$-torus $T$ of $Q$ and cocharacters $\lambda, \mu\in Y_{k_s}(T)$ such that $P= P_\lambda$, $L= L_\lambda$ and $Q= P_\mu$.  Now $H\subseteq QR_u(P)\subsetneq P$, and clearly $QR_u(P)$ is $k$-defined.  But it is easily checked that $QR_u(P)= P_{m\lambda+\mu}$ for suitably large $m\in {\mathbb N}$ (cf.\ \cite[Lem.~6.2(i)]{BMR}), so $QR_u(P)$ is an R-parabolic $k$-subgroup of $G$, contradicting the minimality of $P$.  Conversely, if $P$ is an R-parabolic $k$-subgroup with R-Levi $k$-subgroup $L$ such that $P\supseteq H$ and $c_L(H)$ is $L$-ir over $k$ then a similar argument shows that $P$ is minimal among the R-parabolic $k$-subgroups containing $H$.  This proves the claim.
 
 In particular, let $G$, $H$, $\lambda$ and $H'$ be as in the $\GL_n$ example in Section~\ref{sec:intro}.  Let $P= P_\lambda$ be the parabolic subgroup of block upper triangular matrices with blocks of size $n_1,\ldots, n_r$ down the leading diagonal.  Let $L= L_\lambda$ be the subgroup of block diagonal matrices with blocks of size $n_1,\ldots, n_r$ down the leading diagonal.  Since each $n_i\times n_i$ block yields an irreducible representation of $H':= c_\lambda(H)$, $H'$ is $L$-ir over $k$, so $P$ is minimal among the R-parabolic $k$-subgroups of $G$ containing $H$; hence $H'$ is the $k$-semisimplification of $H$ yielded by $(P,L)$.
\end{ex}

\begin{ex}
 Suppose ${\rm char}(k)= 0$.  Let $H$ be a $k$-subgroup of $G$ and let $P$ be an R-parabolic subgroup of $G$ with R-Levi subgroup $L$ such that $P\supseteq H$.  Then Corollary~\ref{cor:char0} implies that $c_L(H)$ is a $k$-semisimplification of $H$ if and only if $R_u(H)\subseteq R_u(P)$.
\end{ex}

\begin{rem}
 Given a reductive $k$-group $G$ and a subgroup $H$ of $G$, we may (as in Remark~\ref{rem:absolute}) regard $G$ as a $\ovl{k}$-group by forgetting the $k$-structure, so it makes sense to consider the semisimplification (i.e., the $\ovl{k}$-semisimplification) of $H$.  The reader is warned that it can happen that $H$ is $G$-cr over $k$ but not $G$-cr, or vice versa (see \cite[Ex.~5.11]{BMR} and \cite[Ex.~7.22]{BMRT}), so there is no direct relation between the notions of $k$-semisimplification and semisimplification.
\end{rem}

\section{Optimality and normal subgroups}
\label{sec:opt}

In Example \ref{ex:comments} we observed that not every element of ${\mathcal D}_k(H)$ need be a $k$-semisimplification of $H$.  On the other hand, it can happen that $H$ is contained in many different R-parabolic subgroups of $G$, and there may exist many conjugate, but different, $k$-semisimplifications.  We now recall two constructions that give under some extra hypotheses a more canonical choice of R-parabolic subgroup yielding a $k$-semisimplification.  They apply in particular when $G= \GL_n$ (see Example \ref{ex:GL_n_geom}); this does not seem to be well known even when $k= \ovl{k}$.

\smallskip
\noindent {\bf First construction:} Suppose $G$ is connected, $H$ is a subgroup of $G$ and $H$ is not $G$-cr over $k$.  We use the theory of spherical buildings (see \cite{serre1.5}, \cite{serre2}) and the argument of \cite[Proof of Thm.\ 1.1]{BMR:sepext}.  Recall that the spherical building $\Delta_k(G)$ of $G$ is a simplicial complex whose simplices are the  parabolic $k$-subgroups of $G$, ordered by reverse inclusion (the proper $k$-parabolic subgroups correspond to the non-empty simplices).  The apartments of $\Delta_k(G)$ are the sets of all $k$-parabolic subgroups of $G$ that contain a fixed maximal split $k$-torus $S$ of $G$.  The set $\Sigma$ of parabolic $k$-subgroups $P$ of $G$ such that $P\supseteq H$ is a convex subcomplex of $\Delta_k(G)$, and $\Sigma$ is not completely reducible in the sense of \cite[\S 2.2]{serre2} because $H$ is not $G$-cr over $k$ (see \cite[\S 3.2.1]{serre2}).  By the Tits Centre Conjecture---see, e.g., \cite[\S 2.6]{stcc}, and \cite[\S 2.4]{serre2} and the references therein---$\Sigma$ has a so-called ``centre'': a proper parabolic $k$-subgroup $P_c\in \Sigma$ such that $P_c$ is fixed by any building automorphism of $\Delta_k(G)$ that stabilizes $\Sigma$.  In particular, $P_c$ is stabilized by any $k$-automorphism of $G$ that stabilizes $H$.

\begin{lem}
\label{lem:minlopt}
 Let $G$, $H$ and $\Sigma$ be as above.  Let $P_c$ be a centre for $\Sigma$ such that $P_c$ is not properly contained in any other centre for $\Sigma$.  Then $P_c$ yields a $k$-semisimplification of $H$.
\end{lem}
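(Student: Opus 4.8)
Following the argument of \cite[Proof of Thm.\ 1.1]{BMR:sepext}, the plan is to argue by contradiction, exploiting that $P_c$ admits no proper refinement among the centres of $\Sigma$. Since $G$ is connected we may pick an R-Levi $k$-subgroup $L$ of $P_c$, and by Remark~\ref{rem:ss}(b) it suffices to show that the pair $(P_c,L)$ yields a $k$-semisimplification of $H$, i.e.\ that $H':=c_L(H)$ is $G$-completely reducible over $k$; by Proposition~\ref{prop:levi_ascent_descent}(b) this is the same as $H'$ being $L$-cr over $k$. So suppose, for a contradiction, that $H'$ is not $L$-cr over $k$.

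Then the set $\Sigma_L$ of parabolic $k$-subgroups of $L$ containing $H'$ is a convex subcomplex of $\Delta_k(L)$ which is not completely reducible, by the reasoning of \cite[\S 3.2.1]{serre2} that shows $\Sigma$ is not completely reducible. By the Tits Centre Conjecture applied to $L$, the complex $\Sigma_L$ has a centre $Q_c$: a proper parabolic $k$-subgroup of $L$ with $H'\subseteq Q_c$ which is fixed by every building automorphism of $\Delta_k(L)$ that stabilises $\Sigma_L$. Set $\widetilde{Q}:=Q_cR_u(P_c)$. Using standard facts about parabolic subgroups one checks that $\widetilde{Q}$ is a parabolic $k$-subgroup of $G$ with $R_u(P_c)\subseteq\widetilde{Q}\subsetneq P_c$ (the inclusion proper since $Q_c\subsetneq L$) whose image under $c_L$ is $Q_c$. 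Since $c_L(h)\in H'\subseteq Q_c$ for every $h\in H$ and $\ker c_L=R_u(P_c)\subseteq\widetilde{Q}$, we obtain $H\subseteq\widetilde{Q}$, so $\widetilde{Q}\in\Sigma$.

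The heart of the argument is to show that $\widetilde{Q}$ is again a centre for $\Sigma$. Let $\phi$ be a building automorphism of $\Delta_k(G)$ stabilising $\Sigma$. Since $P_c$ is a centre we have $\phi(P_c)=P_c$, so $\phi$ preserves the subcomplex of $\Delta_k(G)$ consisting of the parabolic $k$-subgroups of $G$ contained in $P_c$; identifying that subcomplex with $\Delta_k(L)$ via $P'\mapsto P'/R_u(P_c)$, $\phi$ induces a building automorphism $\bar\phi$ of $\Delta_k(L)$. Under this identification the simplices of $\Sigma$ contained in $P_c$ correspond precisely to the simplices of $\Sigma_L$ (a parabolic $P'\subseteq P_c$ of $G$ contains $H$ if and only if $P'/R_u(P_c)$ contains $H'$), so $\bar\phi$ stabilises $\Sigma_L$; as $Q_c$ is a centre for $\Sigma_L$ we get $\bar\phi(Q_c)=Q_c$, which pulls back to $\phi(\widetilde{Q})=\widetilde{Q}$. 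Hence $\widetilde{Q}$ is a centre for $\Sigma$. But $\widetilde{Q}\subsetneq P_c$ as subgroups, so, in the ordering of simplices by reverse inclusion, $P_c$ is properly contained in $\widetilde{Q}$, contradicting the hypothesis that $P_c$ is not properly contained in any other centre for $\Sigma$. Therefore $H'$ is $L$-cr over $k$, and $(P_c,L)$---hence $P_c$---yields a $k$-semisimplification of $H$.

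The step I expect to require the most care is the last paragraph: making precise the isomorphism between the residue of the simplex $P_c$ in $\Delta_k(G)$ and the building $\Delta_k(L)$ so that it is simultaneously compatible with building automorphisms and with the matching of $\Sigma$ (restricted to parabolics contained in $P_c$) with $\Sigma_L$, and checking that the Tits Centre Conjecture is available in the form needed for the possibly relative building $\Delta_k(L)$. The remaining points---that $\widetilde{Q}=Q_cR_u(P_c)$ is a $k$-parabolic subgroup of $G$ containing $H$, and the passage between $L$-complete reducibility and $G$-complete reducibility via Proposition~\ref{prop:levi_ascent_descent}---are routine.
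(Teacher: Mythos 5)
Your proposal follows the same overall strategy as the paper: fix an R-Levi $k$-subgroup $L$ of $P_c$, assume $c_L(H)$ is not $L$-cr over $k$, apply the Tits Centre Conjecture to $\Sigma_L\subseteq\Delta_k(L)$ to get a centre $Q_c$, lift it to a parabolic $k$-subgroup $\widetilde{Q}=Q_cR_u(P_c)\subsetneq P_c$, and show $\widetilde{Q}$ is a centre for $\Sigma$, contradicting minimality (in the simplex ordering by reverse inclusion).

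The genuine gap is exactly the point you flag at the end. You assert that $\phi$ ``induces a building automorphism $\bar\phi$ of $\Delta_k(L)$'' after restricting to the residue of $P_c$. A priori this restriction only gives a \emph{simplicial} automorphism of $\Delta_k(L)$: identifying the residue of $P_c$ with $\Delta_k(L)$ via $Q\mapsto Q\cap L$ (or your $Q\mapsto Q/R_u(P_c)$) is an isomorphism of posets, but there is no reason for the image of an apartment of $\Delta_k(L)$ under $\phi_L$ to again be an apartment unless one proves it. This is not automatic: a simplicial automorphism of a spherical building need not preserve the apartment system when the building fails to be thick or irreducible, and $\Delta_k(L)$ can easily fail to be either. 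Without the building-automorphism property you cannot conclude $\phi_L(Q_c)=Q_c$ from the centre property of $Q_c$ for $\Sigma_L$, and the whole contradiction collapses.

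The paper fills this gap with a specific argument: take a maximal split $k$-torus $S$ of $L$; since $\phi$ is a building automorphism of $\Delta_k(G)$ there is a maximal split $k$-torus $S'$ of $G$ such that $\phi$ sends parabolics containing $S$ to parabolics containing $S'$; since $\phi(P_c)=P_c$, one gets $S'\subseteq P_c$; then Lemma~\ref{lem:Levi_tor} plus Lemma~\ref{lem:parprops}(ii) produce a $u\in R_u(P_c)(k)$ with $uS'u^{-1}\subseteq L$, and one checks directly that $\phi_L$ carries the apartment of $S$ in $\Delta_k(L)$ into the apartment of $uS'u^{-1}$. Your proof needs this (or an equivalent argument) spelled out; everything else is sound and matches the paper.
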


\begin{proof}
 Let $\Lambda$ be the set of $k$-parabolic subgroups $Q$ of $G$ such that $Q\subseteq P_c$.  Fix a Levi $k$-subgroup $L$ of $P_c$.  We have an inclusion-preserving bijection $\psi$ from $\Lambda$ to $\Delta_k(L)$ given by $Q\mapsto Q\cap L$, with inverse given by $R\mapsto R R_u(P_c)$.  Let $\Sigma_L$ be the subset of $\Delta_k(L)$ consisting of all the $k$-parabolic subgroups of $L$ that contain $c_L(H)$.  It is clear that $\psi(\Sigma\cap \Lambda)= \Sigma_L$.  If $\phi$ is a building automorphism of $\Delta_k(G)$ that fixes $P_c$ then $\phi$ stabilizes $\Lambda$, and we get an automorphism $\phi_L$ of $\Delta_k(L)$ (as a simplicial complex) given by $\phi_L(Q\cap L)= \phi(Q)\cap L$; moreover, if $\phi$ stabilizes $\Sigma$ then $\phi_L$ stabilizes $\Sigma_L$.
 
 We claim that $\phi_L$ is a building automorphism of $\Delta_k(L)$.  It is enough to show that $\phi_L$ maps apartments to apartments.  Let $S$ be a maximal split $k$-torus of $L$ (and hence of $G$).  Since $\phi$ is a building automorphism, there is a maximal split $k$-torus $S'$ of $G$ such that for every $k$-parabolic subgroup $Q$ of $G$ that contains $S$, $\phi(Q)$ contains $S'$.  In particular, $S'\subseteq P_c$ since $\phi(P_c)= P_c$.  By Lemma~\ref{lem:Levi_tor} there is a $k$-Levi subgroup $L'$ of $P_c$ such that $S'\subseteq L'$.  By Lemma~\ref{lem:parprops}(ii) there exists $u\in R_u(P_c)(k)$ such that $uS'u^{-1}\subseteq L$.  Let $R\in \Delta_k(L)$ such that $S\subseteq R$: say, $R= Q\cap L$ for $Q\in \Lambda$.  Then $S'\subseteq \phi(Q)$.  Since $\phi(Q)\subseteq P_c$, $R_u(\phi(Q))$ contains $R_u(P_c)$, so $uS'u^{-1}\subseteq \phi(Q)$.  Hence $uS'u^{-1}\subseteq \phi(Q)\cap L= \phi_L(R)$.  This proves the claim.
 
 Now suppose $P_c$ does not yield a $k$-semisimplification of $H$.  Then $c_L(H)$ is not $L$-cr over $k$.  By the discussion before the lemma, $\Sigma_L$ has a centre $R\subsetneq L$.  We have $R= Q\cap L$ for some $Q\in \Lambda$ with $Q\subsetneq P_c$.  But the results in the previous paragraph imply that $Q$ is a centre for $\Sigma$, contradicting the minimality of $P_c$.
\end{proof}

\smallskip
\noindent {\bf Second construction:} We allow $G$ to be non-connected again.  Suppose the following property holds for a subgroup $H$ of $G$:\\

\noindent ($*$) there exists an R-parabolic $k$-subgroup $P$ of $G$ such that $H\subseteq P$ but $H$ is not contained in any R-Levi subgroup---that is, any R-Levi $\ovl{k}$-subgroup---of $P$.\\

This hypothesis implies in particular that $H$ is not $G$-cr over $k$.  The construction in \cite[Sec.~5.2]{GIT} then
yields a canonical so-called `optimal destabilising' R-parabolic $k$-subgroup $P_{\rm opt}$ of $G$ such that $H\subseteq P_{\rm opt}$ but $H$ is not contained in any R-Levi subgroup of $P_{\rm opt}$.
If $k$ is perfect then $P_{\rm opt}$ yields both a $\ovl{k}$-semisimplification of $H$ and a $k$-semisimplification of $H$ by \cite[Thm.~4.2]{kempf}, but both can fail for general $k$.
Moreover, $P_{\rm opt}$ is stabilized by any $k$-automorphism of $G$ that stabilizes $H$; in particular, if $M$ is a $k$-subgroup of $G$ that normalizes $H$ then $M(k)$ normalizes $P_{\rm opt}$. See \cite[Thm.~5.16]{GIT} for details.

This construction rests on the notion of an ``optimal destabilising cocharacter'' due to work of Hesselink \cite{He}, Kempf \cite{kempf} and Rousseau \cite{rousseau}.  Roughly speaking, the idea is as follows.  Take a generic tuple ${\mathbf h}\in H^m$ for $H$.  Choose ${\mathbf g}\in G^m$ such that $G(k)\cdot {\mathbf g}$ is accessible from ${\mathbf h}$ over $k$ and $G(k)\cdot {\mathbf g}$ is cocharacter-closed over $k$.  Set ${\mathcal O}({\mathbf h})= G(\ovl{k})\cdot {\mathbf g}$; note that ${\mathcal O}({\mathbf h})$ is uniquely defined by Theorem~\ref{thm:RHMT}.  Roughly speaking, we define $\lambda_{\rm opt}\in Y_k(G)$ to be the cocharacter that takes ${\mathbf h}$ into ${\mathcal O}({\mathbf h})$ as quickly as possible (in an appropriate sense), and we define $P_{\rm opt}$ to be $P_{\lambda_{\rm opt}}$.  (In fact, we need a slight variation---due to Hesselink---on this construction: rather than taking a single generic tuple ${\mathbf h}$, one considers the action of a cocharacter $\lambda$ on all elements of $H$ at once.)  Note that $P_{\rm opt}$ is not uniquely determined (see \cite[Rem.~5.22]{GIT}).

Now suppose that $H$ is a subgroup of $G$ such that $C_G(H)$ is $k$-defined.
One can show that if $H$ is $G$-cr then $H$ is $G$-cr over $k$ (as previously noted, the converse is false).
In fact, we prove a slightly stronger result: if $H$ is not $G$-cr over $k$ then hypothesis ($*$) holds.  To see this, choose a generic tuple ${\mathbf h}\in H^m$.  We can find $\lambda\in Y_k(G)$ such that $(P_\lambda, L_\lambda)$ yields a $k$-semisimplification $H'$ of $H$; so $G(k)\cdot c_\lambda({\mathbf h})$ is cocharacter-closed over $k$ but $G(k)\cdot {\mathbf h}$ is not.  If $H$ is contained in an R-Levi $\ovl{k}$-subgroup $L$ of $P_\lambda$ then $c_\lambda({\mathbf h})= u\cdot {\mathbf h}$ for some $u\in R_u(P_\lambda)$.  But then \cite[Thm.~7.1]{cochar} implies that $c_\lambda({\mathbf h})= u_1\cdot {\mathbf h}$ for some $u_1\in R_u(P_\lambda)(k)$, so $G(k)\cdot c _\lambda({\mathbf h})= G(k)\cdot {\mathbf h}$, a contradiction.

\begin{rem}
	\label{rem:norm}
	Let $M$ be a $k$-subgroup of $G$ such that $M$ normalizes $H$, and let $P$ be the R-parabolic subgroup of $G$ obtained from one of the constructions above.
	Then it is automatic that $M(k)$ normalizes $P$.
	However, under the extra hypothesis that $H$ is $k$-defined, we can in fact show that $M\subseteq N_G(P)$.
	To see this, one can first extend the field from $k$ to $k_s$ and then show that the R-parabolic subgroup obtained from either of the constructions is $k$-defined (cf.\ \cite[Proof of Thm.~1.1]{BMR:sepext} and \cite[Sec.~4]{kempf}), and hence coincides with $P$---this implies that $M(k_s)$, and hence $M$, normalizes $P$.
\end{rem}

\begin{rem}
	\label{rem:smooth_crit}
	There are some limitations on the constructions given above.
	First, without the hypothesis that $k$ is perfect, it can happen that the subgroup obtained from $P_{\rm opt}$ is not $G$-cr over $k$, and is therefore not a $k$-semisimplification of $H$.  (It is, however, $G(\ovl{k})$-conjugate to a $k$-semisimplification of $H$.)
	Second, as yet there is no theory of optimal destabilising subgroups that holds for arbitrary fields---this means that we do not know how to define a version of $P_{\rm opt}$ for a subgroup $H$ that is not $G$-cr over $k$ if ($*$) does not hold.
	See \cite[Sec.~1 and Ex.~5.21]{GIT} for further discussion of this latter point.
\end{rem}

By combining the two constructions above we obtain the following ``Clifford theory'' result, exploring the link between the semisimplification of a group and a normal subgroup.
In the case $k$ is algebraically closed, part (a) is \cite[Thm.~3.10]{BMR}.

\begin{thm}
	\label{thm:normal}
	Let $M$ be a $k$-subgroup of $G$ and let $H$ be a normal $k$-subgroup of $M$.  Suppose at least one of the following holds:
	\begin{enumerate}
		\item[(i)]
		$k$ is perfect.
		\item[(ii)] $G$ is connected.
	\end{enumerate}
	Then:
	\begin{enumerate}
		\item[(a)] If $M$ is $G$-completely reducible over $k$ then $H$ is $G$-completely reducible  over $k$.
		\item[(b)] There is an R-parabolic subgroup $P$ of $G$ such that $M\subseteq P$ and $P$ yields both a $k$-semisimplification of $M$ and a $k$-semisimplification of $H$.  In particular, there exist $k$-semisimplifications $M'$ (resp., $H'$) of $M$ (resp., of $H$) such that $H'$ is normal in~$M'$.
	\end{enumerate}
\end{thm}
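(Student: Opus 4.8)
The plan is to prove (a) first, using the two constructions described in this section, and then to deduce (b) by passing to a minimal R-parabolic $k$-subgroup containing $M$ and applying (a) inside one of its R-Levi subgroups.

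\textbf{Part (a).} Suppose $M$ is $G$-cr over $k$ and, for a contradiction, that $H$ is not $G$-cr over $k$. The aim is to produce an R-parabolic $k$-subgroup $P$ of $G$ with $H\subseteq P$, $M\subseteq P$, and $H$ contained in no R-Levi $k$-subgroup of $P$; this is absurd, since $M$ being $G$-cr over $k$ forces $M$, and hence $H$, into some R-Levi $k$-subgroup of $P$. If $G$ is connected, I would take $P=P_c$ with $P_c$ as in Lemma~\ref{lem:minlopt}, namely a centre for the convex subcomplex $\Sigma$ of parabolic $k$-subgroups of $G$ that contain $H$, chosen not to be properly contained in any other centre; then $P_c$ yields a $k$-semisimplification of $H$, so $H$ cannot lie in any R-Levi $k$-subgroup $L'$ of $P_c$, since otherwise $c_{L'}(H)=H$ would be $G$-cr over $k$ by Remark~\ref{rem:ss}(b). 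If instead $k$ is perfect, then $C_G(H)$ is $k$-defined, so hypothesis ($*$) holds by the discussion preceding Remark~\ref{rem:norm}, and I would take $P=P_{\rm opt}$, which by its defining property satisfies $H\subseteq P_{\rm opt}$ but is contained in no R-Levi subgroup of $P_{\rm opt}$. In either case $M\subseteq N_G(P)$ by Remark~\ref{rem:norm} (using that $H$ is $k$-defined), and $N_G(P)=P$ because R-parabolic subgroups of $G$ are self-normalising \cite[\S 6]{BMR}; hence $M\subseteq P$, which gives the required contradiction.

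\textbf{Part (b).} If $M$ is $G$-cr over $k$, take $P=G$: by (a) the group $H$ is also $G$-cr over $k$, and $(G,G)$ exhibits both $M$ and $H$ as their own $k$-semisimplifications, with $H$ normal in $M$. If $M$ is not $G$-cr over $k$, let $P$ be minimal among the R-parabolic $k$-subgroups of $G$ containing $M$---such a $P$ exists and is proper, since $M$ is not $G$-ir over $k$---and fix an R-Levi $k$-subgroup $L$ of $P$. By the analysis of minimal R-parabolic subgroups in Section~\ref{sec:ss}, $c_L(M)$ is $L$-ir over $k$, hence $L$-cr over $k$, hence $G$-cr over $k$ by Proposition~\ref{prop:levi_ascent_descent}(b); thus $M':=c_L(M)$ is a $k$-semisimplification of $M$ yielded by $(P,L)$. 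Since $c_L$ is a $k$-defined homomorphism and $H\trianglelefteq M$, the subgroup $H':=c_L(H)$ is a normal $k$-subgroup of $M'$; moreover $L$ is a reductive $k$-group satisfying hypothesis (i) or (ii), noting that $L$ is connected whenever $G$ is. Applying (a) inside $L$ shows that $c_L(H)$ is $L$-cr over $k$, and hence $G$-cr over $k$, so $(P,L)$ also yields $H'$ as a $k$-semisimplification of $H$. As $H'\trianglelefteq M'$, this completes the proof.

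The main obstacle is part (a) in the general imperfect, non-connected setting, where neither of the two constructions is available; hypotheses (i) and (ii) are exactly what rescue the argument---(ii) makes the Tits Centre Conjecture construction applicable, while (i) forces $C_G(H)$ to be $k$-defined and hence ($*$) to hold, making the optimal destabilising construction applicable. The technical care within (a) consists in checking that the R-parabolic subgroup produced by each construction keeps $H$ outside all of its R-Levi subgroups, and that $M$ normalises it, so that the complete reducibility of $M$ can be brought to bear.
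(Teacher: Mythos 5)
Your overall strategy for part (a)---prove the contrapositive by producing an R-parabolic $k$-subgroup containing both $H$ and $M$ such that $H$ (and hence $M$) is excluded from all R-Levi $k$-subgroups, using $P_c$ in case~(ii) and $P_{\rm opt}$ in case~(i)---is precisely the paper's approach, and your justification that $H$ avoids all R-Levi $k$-subgroups of $P$ is sound in both cases. Your proof of part (b) differs slightly from the paper's (you take $P$ minimal among R-parabolic $k$-subgroups containing $M$ and argue via $L$-irreducibility, while the paper simply takes any $\lambda\in Y_k(G)$ such that $(P_\lambda,L_\lambda)$ yields a $k$-semisimplification of $M$ and applies part (a) directly to $c_\lambda(H)\trianglelefteq c_\lambda(M)$), but both routes work.

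There is, however, one genuine gap in your proof of part (a): the assertion that $N_G(P)=P$ ``because R-parabolic subgroups of $G$ are self-normalising.'' This is true when $G$ is connected, so your argument is complete in case (ii). But in case (i), $G$ may be non-connected, and R-parabolic subgroups of non-connected reductive groups are \emph{not} self-normalizing in general. For example, let $G^0=\GL_2\times\GL_2$, let $G=G^0\rtimes\mathbb{Z}/2$ with the involution swapping the factors, and let $\mu\in Y(G^0)$ with $P_\mu^0=B\times B$ but $P_\mu$ not meeting the non-identity component (this happens when the two components of $\mu$ are not conjugate under the swap); then $N_G(P_\mu)=(B\times B)\rtimes\mathbb{Z}/2\supsetneq P_\mu$. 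Your argument needs $M\subseteq P$, not just $M\subseteq N_G(P)$, in order to force $H$ into an R-Levi $k$-subgroup of $P$. The paper sidesteps this by arguing directly with $N_G(P)$: since $N_G(P)^0=P^0$, Lemma~\ref{lem:conn_to_nonconn} identifies the R-Levi $k$-subgroups of both $P$ and $N_G(P)$ with normalizers of Levi $k$-subgroups of $P^0$, so ``$H$ not contained in any R-Levi $k$-subgroup of $P$'' upgrades to ``$H$ not contained in any R-Levi $k$-subgroup of $N_G(P)$'', which together with $M\subseteq N_G(P)$ gives the contradiction. You should replace the self-normalization step with this transfer argument.
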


\begin{proof}
	Suppose $H$ is not $G$-cr over $k$.  Choose $P= P_{\rm opt}$ in case (i) and $P= P_c$ in case (ii).  Then $M\subseteq N_G(P)$ by Remark~\ref{rem:norm}.  Since $H$ is not contained in any R-Levi $k$-subgroup of $P$, $H$ is not contained in any R-Levi $k$-subgroup of $N_G(P)$ (Lemma~\ref{lem:conn_to_nonconn}).  Hence $M$ is not contained in any R-Levi $k$-subgroup of $N_G(P)$.  It follows that $M$ is not $G$-cr over $k$.  This proves part (a).
	
	For (b), pick $\lambda\in Y_k(G)$ such that $(P_\lambda, L_\lambda)$ yields a semisimplification $M':= c_\lambda(M)$ of $M$.  Then $c_\lambda(M)$ is $G$-cr over $k$ and $c_\lambda(H)$ is normal in $ c_\lambda(M)$.  Now $c_\lambda(M)$ and $c_\lambda(H)$ satisfy the hypotheses of the theorem,
	so $c_\lambda(H)$ is $G$-cr over $k$ by (a).  Hence $(P_\lambda, L_\lambda)$ yields a semisimplification $H':= c_\lambda(H)$ of $H$ as well, and $H'$ is normal in $ M'$.
\end{proof}

\begin{rem}
	The hypothesis in part (ii) can be weakened: one only needs to assume that $H\subseteq G^0$.
	In order to make the proof go through, one needs to verify that the first construction above extends to this situation.
\end{rem}

\begin{ex}
	\label{ex:GL_n_geom}
	Let $H$ be a $k$-subgroup of $G = \GL_n$ such that $H$ is not completely reducible over $k$.  Since $H$ is separable, $C_G(H)$ is $k$-defined, so $H$ is not $G$-completely reducible; we obtain a parabolic $k$-subgroup $P_{\rm opt}$ as above which yields a subgroup $H'$.  We claim that $H'$ is a $k$-semisimplification of $H$.  For suppose $H'$ is not $G$-cr over $k$.  Choose ${\mathbf h}$, ${\mathbf g}$ as above, and let ${\mathbf h}'= c_{\lambda_{\rm opt}}({\mathbf h})$ (so that ${\mathbf h}'$ is a generic tuple for $H'$).  Since $C_G(H')$ is $k$-defined, hypothesis $(*)$ holds, so we obtain an optimal cocharacter which takes ${\mathbf h}'$ out of $G\cdot {\mathbf h}'= {\mathcal O}({\mathbf h})$ and into ${\mathcal O}({\mathbf h}')$.  But ${\mathbf g}$ is accessible from ${\mathbf h}'$ over $k$ by \cite[Thm.~4.3(ii)]{cochar}, so ${\mathcal O}({\mathbf h}')= {\mathcal O}({\mathbf h})$, a contradiction.
	
	The parabolic subgroup $P_{\rm opt}$ is the stabilizer of some flag ${\mathcal F}$ of subspaces of $k^n$, and ${\mathcal F}$ does not admit a complementary $H$-stable flag of subspaces of $k^n$. By Remark~\ref{rem:norm}, $C_G(H)$ is a subgroup of $P_{\rm opt}$---that is, $C_G(H)$ stabilizes ${\mathcal F}$---and likewise the normalizer $N_G(H)$ stabilizes ${\mathcal F}$ if $N_G(H)$ is $k$-defined.  If $k$ is perfect then $N_G(H)$ is automatically $k$-defined but it need not be $k$-defined in general; see \cite{HeSt} for further discussion.
\end{ex}

\begin{rem}
 Hesselink gives an example \cite[(8.5)~Ex.]{He} of a subgroup $H$ of an almost simple group $G$ of type $C_2$ such that $P_{\rm opt}$ is not a minimal centre for $\Sigma$, the subcomplex of the building $\Delta_k(G)$ of $G$ consisting of all parabolic subgroups of $G$ that contain $H$.  This shows that the two constructions above can yield different R-parabolic subgroups.  Nevertheless, the corresponding $k$-semisimplifications of $H$ are 
 $G(k)$-conjugate, thanks to Theorem \ref{thm:main}. 
 \end{rem}


\medskip
\noindent {\bf Acknowledgements}:

\noindent We are grateful to Brian Lawrence for his questions, which motivated this paper, and for his comments on an earlier draft.
We thank the referee for their comments.


\end{document}